\newtheorem{proposition}{Proposition}
\newtheorem{lemma}{Lemma}
\newtheorem{theorem}{Theorem}
\newcommand{\p}{\Bbb{P}}
\newcommand{\e}{\Bbb{E}}
\newcommand{\ud}{\mathrm{d}}
\newcommand{\R}{\mathbb{R}}
\newcommand{\eps}{\varepsilon}
\renewcommand{\P}{\mathbb{P}}
\newcommand{\N}{\mathbb{N}}
\newcommand{\E}{\mathbb{E}}
\newcommand{\Ind}[1]{\mathbf{1}_{\{#1\}}}
\newcommand{\Expo}[1]{\exp\left\{#1\right\}}
\newcommand{\Prob}[1]{\mathbb{P}\left(#1\right)}
\newcommand{\Exp}[1]{\mathbb{E}\left[#1\right]}
\newcommand{\Expx}[2]{\mathbb{E}_{#1}\left[ #2 \right]}
\newcommand{\q}{^{(q)}}
\begin{document}

\title[Asymptotic behaviour of exponential functionals of L\'evy processes]
{Asymptotic behaviour of exponential functionals of L\'evy processes with applications to random processes in random environment}
\thanks{CS thanks the Centro de Investigaci\'on en Matem\'aticas, where part of this work was done. 
This work  was partially funded by the Chair "Mod\'elisation Math\'ematique et Biodiversit\'e" of VEOLIA-Ecole Polytechnique-MNHN-F.X. and 
by the franco-mexican project PICS (CNRS) "Structures Markoviennes Auto-Similaires". SP and JCP  acknowledge support from  the Royal Society and  SP also acknowledge support from CONACyT-MEXICO Grant 351643.}
\author{Sandra Palau}
\address{Centro de Investigaci\'on en Matem\'aticas A.C. Calle Jalisco s/n. 36240 Guanajuato, M\'exico}
\email{sandra.palau@cimat.mx}
\author{Juan Carlos Pardo}
\address{Centro de Investigaci\'on en Matem\'aticas A.C. Calle Jalisco s/n. 36240 Guanajuato, M\'exico}
\email{jcpardo@cimat.mx}
\author{Charline Smadi}
\address{Irstea, UR LISC, Laboratoire d'Ing\'enierie des Syst\`emes
Complexes, 9 avenue Blaise Pascal-CS 20085, 63178 Aubi\`ere, France and
Department of Statistics, University of Oxford, 1 South Parks Road, Oxford
OX1 3TG, UK}
\email{charline.smadi@polytechnique.edu}

\date{}

\maketitle
\vspace{0.2in}

\begin{abstract} 
\noindent Let $\xi=(\xi_t, t\ge 0)$ be a real-valued L\'evy process and define  its associated exponential functional as follows
\[
I_t(\xi):=\int_0^t \exp\{-\xi_s\}{\ud s}, \qquad t\ge 0.
\]
Motivated by applications to stochastic processes in random environment,  we study the asymptotic behaviour of 
\[
\mathbb{E}\Big[F\big(I_t(\xi)\big)\Big] \qquad \textrm{as}\qquad t\to \infty,
\]
where  $F=(F(x),x\geq 0)$ is a function with polynomial decay at infinity and which is non increasing for large $x$. 
In particular, under some exponential moment conditions on $\xi$, we find five different regimes that depend  on the shape of the Laplace exponent of $\xi$.  
Our proof relies on a discretization of the exponential functional $I_t(\xi)$ and is closely related to the behaviour
of functionals of semi-direct products of random variables.

We apply our results to three questions associated to  stochastic processes in random environment. We first consider 
the asymptotic behaviour of extinction and explosion for self-similar continuous state branching processes in a 
L\'evy random environment. Secondly, we focus on the asymptotic behaviour of the mean population size in a  
model with competition or  logistic growth which is affected by a 
L\'evy random environment and finally, we study the tail behaviour of the maximum of a diffusion in a L\'evy random environment. \\

\noindent {\sc Key words and phrases}: L\'evy processes, exponential functional, continuous state branching processes in 
random environment,  explosion and extinction probabilities,  logistic process, diffusions in random environment.\\

\noindent MSC 2000 subject classifications: 60G17, 60G51, 60G80.
\end{abstract}

\section{Introduction and main results}\label{section1}

A one-dimensional L\'evy process is a stochastic process issued from the origin with stationary and independent increments and almost sure c\`adl\`ag paths. We write $\xi=(\xi_t : t\geq 0)$ for its trajectory and $\mathbb{P}$ for its law. 
The process $\xi$ is a strong Markov process, and for each $x\in\mathbb{R}$, we denote by 
$\mathbb{P}_x$ its  law when issued from $x$ with the understanding that $\mathbb{P}_0 = \mathbb{P}$. 
The law of a L\'evy process is characterized by its one-time transition probabilities. 
In particular there always exists a triple $(\mu, \rho, \Pi)$ where $\mu\in\mathbb{R}$, 
$\rho\in \mathbb{R}$ and $\Pi$ is a measure on $\mathbb{R}\backslash\{0\}$ satisfying the integrability condition $\int_{\mathbb{R}}(1\wedge x^2)\Pi({\rm d}x)<\infty$, such that, for all $z \in\mathbb{R}$ 
\begin{equation*}
\mathbb{E}[e^{i z \xi_t}] = e^{ t \psi( i z)},
\end{equation*}
where the Laplace exponent $\psi(z)$ is given by the L\'evy-Khintchine formula
\begin{equation*}
\psi(z) = \frac{1}{2}\rho^2 z^2 +  \mu z + \int_{\mathbb{R}}\left( e^{ z x} -1- z x \ell(x)\right)\Pi({\rm d}x), \qquad z\in \R.
\end{equation*}
Here, $\ell(x)$ is the cutoff function which is usually taken to be $\ell(x)= \mathbf{1}_{\{|x|<1\}}$.  Whenever the process $\xi$ has finite mean,  we will take $\ell(x)\equiv 1$. 

In this paper, we are interested in studying the exponential functional of $\xi$, {defined by}
\[
I_t(\xi):=\int_0^t e^{-\xi_s}{\ud s}, \qquad t\ge 0.
\]
In recent years there has been a general recognition that exponential functionals of L\'evy
processes play an important role in various domains of
probability theory such as self-similar Markov processes,  
generalized Ornstein-Uhlenbeck processes, random processes in random environment, 
fragmentation processes, branching processes, mathematical finance, Brownian
motion on hyperbolic spaces, insurance
risk, queueing theory, to name but a few {(see \cite{MR2178044,carmona,kypa} and references therein)}. 

There is a vast literature about  exponential functionals of  L\'evy processes drifting to $+\infty$ or killed at an independent 
exponential time $\mathbf{e}_q$ with parameter $q\ge 0$, see for instance \cite{ BLM, MR2178044, PRV}.  For a L\'evy process $\xi$ satisfying 
one of these assumptions, $I_\infty(\xi)$ or $I_{\mathbf{e}_q}(\xi)$ is 
finite almost surely with an absolute continuous density. Most of the known results on $I_\infty(\xi)$ and $I_{\mathbf{e}_q}(\xi)$ are 
related to the knowledge of their densities or the behaviour of their tail distributions. According to Theorem 3.9 in Bertoin et al. \cite{BLM}, 
there exists a density for  $I_\infty(\xi)$, here denoted by $h$. In the case when 
$q>0$, the existence of the density of $I_{\mathbf{e}_q}(\xi)$ appears  in Pardo et al. \cite{PRV}.  
Moreover,  according to Theorem 2.2. in Kuznetsov et al. \cite{KPS},  under the assumption that 
$\mathbb{E}[|\xi_1|]<\infty$, the density $h$  is completely determined by the following integral equation: for $v>0$,
\begin{equation}\label{expfunctdensity}
\begin{split}
\mu\int_v^\infty h(x)\ud x +\frac{\rho^2}{2}vh(v)&+\int_v^\infty \overline{\overline{\Pi}}^{(-)}\left(\ln \frac{x}{v}\right) h(x)\ud x \\
&+\int_0^v \overline{\overline{\Pi}}^{(+)}\left(\ln \frac{x}{v}\right) h(x)\ud x
+\int_v^\infty\frac{h(x)}{x}\ud x =0,
\end{split}
\end{equation}
where 
\[
 \overline{\overline{\Pi}}^{(+)}(x)=\int_x^\infty\int_y^\infty \Pi(\ud z)\ud y\qquad \textrm{and}\qquad  \overline{\overline{\Pi}}^{(-)}(x)=\int_x^\infty\int_{-\infty}^{-y} \Pi(\ud z)\ud y.
\]
We refer to  \cite{BLM, MR2178044, KPS, PRV}, and the references therein, for more details about these facts.\\

 In this paper, we are interested in the case when the L\'evy process $\xi$ does not satisfy such conditions, 
 in other words when $I_t(\xi)$ does not converge almost surely to a finite random variable, as $t$ goes to $\infty$. More precisely, 
 one of our aims is to study the asymptotic behaviour of 
\[
\mathbb{E}\Big[F\big(I_t(\xi)\big)\Big] \qquad \textrm{as}\qquad t\to \infty,
\]
where  $F$ is a function which, at infinity, is non-increasing and has polynomial decay, under some exponential moment conditions 
on $\xi$. We find five regimes that depend on the shape of the Laplace exponent $\psi$.
These results
will be applied in Section \ref{sectionappli} for some 
particular cases such as
\[
F(x)=x^{-p}, \qquad F(x)=1-e^{x^{-p}},\quad F(x)=e^{-x},\quad \mbox{ or }\quad  F(x)=\frac{a}{b+x} \quad \textrm{for } \quad a,b,p,x> 0 .
\]
Up to our knowledge, the case when the exponential functional of a L\'evy process does not converge has only been studied in a few papers and not in its most general form, 
see for instance \cite{BPS,BH,PP}. 
In all these papers, the main motivation comes from the study of some asymptotic properties of  stochastic processes in random environment, that we briefly describe below. \\

A recent manuscript authored by  Li and Xu  \cite{li2016asymptotic}, that appeared on Arxiv at the same time  this manuscript was prepared, also studied 
the asymptotic behaviour of exponential functionals for L\'evy processes. They obtained interesting results which are similar as the one we present below using 
fluctuation theory for L\'evy processes and the knowledge of L\'evy  processes  conditioned  to stay positive. Our approach is completely different and is 
based on a discretization of the exponential functional of L\'evy processes and on  the asymptotic behaviour of functionals of semi-direct products of random 
variables which was described by Guivarc'h and Liu \cite{GL}.

We would like to stress that Bansaye et al. \cite{BPS} already used a discretization technique to get the asymptotic behaviour of 
the survival probability for branchnig processes with catastrophes. However the  discretization used in \cite{BPS} is different and only covers the case of 
compound Poisson processes.

Branching processes in random environment (BPREs) have been introduced and first studied by 
Smith and Wilkinson. This type of process has attracted considerable interest in the last
decade (see for instance  \cite{afanasyev2012limit,Afanasyev2005,babe,bo10} and the references therein). 
One of the reason is that BPREs are more realistic models than classical branching processes. And, from the mathematical point of view, 
they have interesting features such as a phase transition in the subcritical regime. Scaling limits for BPREs have been studied by 
Kurtz \cite{Kurtz} in the  continuous case and more recently  by Bansaye and Simatos \cite{bansima} in a more general setting.

Continuous state branching processes (CB-processes for short) in random environment, the continuous analogue in time and state space of BPREs, can be  defined as a strong solution of a particular 
stochastic differential equation. They have been studied recently by several authors in different settings. Boeinghoff and Hutzenthaler 
\cite{BH} and Bansaye et al. \cite{BPS} studied  extinction rates for branching diffusions in a Brownian environment and 
branching processes in a random environment driven by a L\'evy process with bounded variations, respectively.
Motivated by these works, Palau and Pardo \cite{PP} studied the long-term behaviour (extinction, explosion, conditioned version) of branching processes in a Brownian random environment. 
In all these manuscripts, the existence of such processes is proved via a stochastic differential equation and  it is observed that  the speed of extinction  is related to an exponential functional of a L\'evy process 
which is associated to the random environment. Moreover, and similarly to  the case of BPREs,  a phase transition in the subcritical regime appears. 
Recently, branching processes in a  general L\'evy random environment were  introduced by Palau and Pardo in \cite{PP1}.\\

Exponential functionals occur naturally in the study of some models of diffusions in \label{defdiff}
random environment, which we now describe informally. Associated with a stochastic process $V = (V(x), x\in \mathbb{R})$ such that $V(0) = 0$, a diffusion $X_V = (X_V(t), t\ge 0)$ in the random potential $V$ is, loosely speaking, a solution to the stochastic differential equation
\[
\ud X_V(t) = \ud \beta_t -\frac{1}{2} V^\prime(X_V (t))\ud t, \qquad  X_V(0) = 0 , 
\]
where $(\beta_t, t\ge  0)$ is a standard Brownian motion independent of $V$. More rigorously, the process $X_V$ should be considered as a diffusion 
whose conditional generator, given $V$, is:
\[
\frac{1}{2}\exp(V(x))\frac{\ud}{\ud x}\left(e^{-V(x)}\frac{\ud}{\ud x}\right).
\]
Observe that from Feller's construction of such diffusions, the potential $V$ does not need to be differentiable.
 Kawazu and Tanaka \cite{kawazu1993maximum} studied the asymptotic behaviour of the tail of the distribution of the maximum 
 of a diffusion in a drifted Brownian potential. Carmona et al. \cite{carmona} considered the case when the potential is a 
 L\'evy process whose jump structure  is of bounded variation. 
 More precisely, they studied the following  question: How fast does $\P(\max_{t\geq 0}X_V(t)>x)$ decay as $x$ go to infinity?
  From these works, we know that
  $$\Prob{\max_{t\ge 0}X_V(t)>x}=\Exp{\frac{\tilde{I}}{\tilde{I}+I_x(-V)}}$$
  where 
  $$\tilde{I}=\int_{-\infty}^{0}e^{V(t)}\ud t \quad \mbox{ and }\quad I_x(-V)=\int_0^x e^{V(t)}\ud t$$
  are independent. As a consequence, exponential functionals play an essential role in this domain.
 \\

Let us now state our main results.  Assume that
\begin{equation}\label{tetam}
\theta^+=\sup\left\{\lambda>0: \psi(\lambda)<\infty\right\}
\end{equation}
exists and is positive. In other words, the Laplace exponent of the L\'evy process $\xi$ can be defined  on $[0, \theta^+)$, see for instance Lemma 26.4 
in Sato \cite{Sa}.
\label{discuLaplace}
Besides, $\psi$ satisfies
$$\psi(\lambda)=\log \Exp{e^{\lambda \xi_1}}, \qquad \lambda\in[0,\theta^+).$$
From Theorem 25.3 in \cite{Sa}, $\psi(\lambda)<\infty$ is equivalent to 
\begin{equation}\label{finite psi}
\int_{\{|x|>1\}} e^{\lambda x}\,\Pi(\ud x)<\infty.
\end{equation}
Moreover $\psi$ belongs to $C^\infty([0,\theta^+))$ with $\psi(0)=0$, $\psi^\prime(0+)\in [-\infty, \infty)$ 
and $\psi^{\prime\prime}(\lambda)>0,$ for  $\lambda\in (0, \theta^+)$ (see  Lemma 26.4 in \cite{Sa}). 
{Hence}, the Laplace exponent $\psi$ is a convex function on  $[0, \theta^+)$ implying that either 
it is positive or it may have another root on $(0, \theta^+)$.
In the latter scenario,   $\psi$ has at most one  global minimum on $(0, \theta^+)$.
 Whenever such a global minimum exists, we denote by $\tau$ the position where it is reached. 
 As we will see  below, this parameter is relevant to determine the asymptotic
 behaviour of $\Exp{I_t(\xi)^{-p}}$, for $0< p<\theta^+$.

Let us introduce the  exponential change of measure known as the Esscher transform. 
According to Theorem 3.9 in Kyprianou \cite{MR2250061}, for 
any $\lambda$ such that \eqref{finite psi} is satisfied, we can perform the following change of measure
\begin{equation}\label{escheerk}
 \frac{\ud \P^{(\lambda)}}{\ud \P}\bigg|_{\mathcal{F}_t}= e^{\lambda \xi_t-\psi(\lambda)t}, 
 \qquad t\ge 0
 \end{equation}
where $(\mathcal{F}_t)_{t\ge 0}$ is the natural filtration generated by $\xi$ which is naturally completed.  Moreover, 
under $\P^{({\lambda})}$ the process $\xi$ is still a L\'evy process with Laplace exponent given by 
\[
\psi_{{\lambda}}(z)=\psi(\lambda+z)-\psi(\lambda),\qquad z\in\R.
\]

\begin{theorem}\label{polinomial}
Assume that $0<p<\theta^+$.
\begin{enumerate}
\item[i)] If $\psi'(0+)>0$, then
$$ \lim_{t \to \infty} \Exp{I_t(\xi)^{-p}} = \Exp{I_{\infty}(\xi)^{-p}}{>0}.$$
\item[ii)] If $\psi'(0+)=0$ and $\psi^{\prime\prime}(0+)<\infty$, then there exists a positive constant $c_1$ such that
$$ \lim_{t \to \infty} \sqrt{t}  \Exp{I_t(\xi)^{-p}} = c_1.$$
\item[iii)] Assume that  $\psi'(0+)<0$ 
\begin{enumerate}
 \item[a)] if $\psi'(p)<0$, then 
$$ \lim_{t \to \infty} e^{-t\psi(p)}   \Exp{I_t(\xi)^{-p}}=\mathbb{E}^{(p)}[I_{\infty}(-\xi)^{-p}]{>0}.$$
\item[b)] if $\psi'(p)=0$ and  $\psi^{\prime\prime}(p)<\infty$ ,
then there exists a positive constant $c_2$ such that
$$\lim_{t \to \infty}\sqrt{t} e^{-t\psi(p)} \Exp{I_t(\xi)^{-p}} = c_2 .$$
\item[c)]   $\psi'(p)>0$ and $\psi^{\prime\prime}(\tau)<\infty$
 then
$$\Exp{I_t(\xi)^{-p}}=o(t^{-1/2}e^{t\psi(\tau)}), \qquad as \quad t\rightarrow \infty.$$
Moreover if we {also} assume that $\xi$ is non-arithmetic (or non-lattice) then
$$\Exp{I_t(\xi)^{-p}}=O(t^{-3/2}e^{t\psi(\tau)}), \qquad as \quad t\rightarrow \infty.$$
\end{enumerate}
\end{enumerate}
\end{theorem}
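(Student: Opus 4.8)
The plan is to reduce all five regimes to two underlying phenomena — almost sure convergence of a \emph{finite} exponential functional, and the persistence of a centered random walk — by combining the Esscher transform \eqref{escheerk} with the time-reversal property of L\'evy processes. The organizing identity is
\[
\mathbb{E}\big[I_t(\xi)^{-p}\big] = e^{t\psi(p)}\,\mathbb{E}^{(p)}\big[I_t(-\xi)^{-p}\big],\qquad t\ge 0 .
\]
To obtain it I would invert \eqref{escheerk}, giving $\mathbb{E}[G]=e^{t\psi(p)}\mathbb{E}^{(p)}[e^{-p\xi_t}G]$ for every nonnegative $\mathcal{F}_t$-measurable $G$, apply this with $G=I_t(\xi)^{-p}$, and write $e^{-p\xi_t}I_t(\xi)^{-p}=\big(\int_0^t e^{\xi_t-\xi_s}\,\mathrm{d}s\big)^{-p}$; the time-reversal $(\xi_t-\xi_{t-u})_{0\le u\le t}\overset{d}{=}(\xi_u)_{0\le u\le t}$, valid under $\mathbb{P}^{(p)}$ since $\xi$ is still a L\'evy process there, turns the inner integral into $I_t(-\xi)$ in law. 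This isolates the exponential rate $e^{t\psi(p)}$ and shows that $\mathbb{E}^{(p)}[I_t(-\xi)^{-p}]$ is governed by the drift of $\xi$ under $\mathbb{P}^{(p)}$, namely $\psi_p'(0+)=\psi'(p)$. Hence case iii splits exactly according to the sign of $\psi'(p)$, while i and ii are the direct ($p=0$) analogues governed by the sign of $\psi'(0+)$.

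For the convergence regimes (i and iii.a) I would argue by monotonicity. When $\psi'(0+)>0$ the process drifts to $+\infty$, so $I_t(\xi)\uparrow I_\infty(\xi)<\infty$ almost surely and $I_t(\xi)^{-p}\downarrow I_\infty(\xi)^{-p}$; since $I_t(\xi)^{-p}\le I_1(\xi)^{-p}$ for $t\ge 1$ and the negative moments of an exponential functional are finite up to order $\theta^+$, dominated convergence gives $\mathbb{E}[I_t(\xi)^{-p}]\to \mathbb{E}[I_\infty(\xi)^{-p}]$, which is finite and strictly positive. For iii.a the identity above reduces matters to $\mathbb{E}^{(p)}[I_t(-\xi)^{-p}]$; as $\psi'(p)<0$, under $\mathbb{P}^{(p)}$ the process $-\xi$ drifts to $+\infty$, and the same argument yields the limit $\mathbb{E}^{(p)}[I_\infty(-\xi)^{-p}]$, finiteness following from $\mathbb{E}^{(p)}[e^{-p\xi_1}]=e^{-\psi(p)}<\infty$.

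For the critical regimes (ii and iii.b) the relevant walk is centered with finite variance ($\psi'(0+)=0$, resp.\ $\psi'(p)=0$, with finite second derivative). Here I would discretize: setting $A_k:=\int_0^1 e^{-(\xi_{k+u}-\xi_k)}\,\mathrm{d}u$, stationarity and independence of increments give
\[
I_n(\xi)=\sum_{k=0}^{n-1} e^{-\xi_k}A_k ,
\]
with $(A_k)_{k\ge 0}$ i.i.d., each distributed as $I_1(\xi)$ and $A_k$ independent of $\mathcal{F}_k$. This realizes $I_n(\xi)$ as a truncated perpetuity, that is, a functional of the semi-direct product driven by $(e^{-\xi_k},A_k)$. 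Since $(\xi_k)$ is centered, $I_n(\xi)^{-p}$ is non-negligible only on the rare event that the walk stays high, whose probability decays like $n^{-1/2}$, and on that event the functional converges, after the walk is conditioned to stay positive, to an integrable limit. The asymptotics of Guivarc'h--Liu \cite{GL} for such semi-direct products supply the precise constant, yielding $\sqrt{n}\,\mathbb{E}[I_n(\xi)^{-p}]\to c_1$, and monotonicity of $t\mapsto I_t(\xi)$ lets me pass from integer $n$ to continuous $t$. Case iii.b is identical after the Esscher transform, applied to $-\xi$ under $\mathbb{P}^{(p)}$, and produces $c_2$.

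The main obstacle is case iii.c, which is genuinely harder: both $\psi'(0+)<0$ and $\psi'(p)>0$ hold, so the correct centering is at the minimizer $\tau\in(0,p)$, where $\psi'(\tau)=0$; but now the power $p$ and the tilt $\tau$ no longer coincide, so the clean reversal identity fails to close. The plan is to use the imperfect version $\mathbb{E}[I_t(\xi)^{-p}]=e^{t\psi(\tau)}\mathbb{E}^{(\tau)}[e^{-\tau\xi_t}I_t(\xi)^{-p}]$ and to estimate the expectation under $\mathbb{P}^{(\tau)}$, where $\xi$ is centered with finite variance. The factor $I_t(\xi)^{-p}$ forces the walk to stay high while $e^{-\tau\xi_t}$ rewards a low endpoint, and the tension between these competing requirements is what produces the order $e^{t\psi(\tau)}$ with only a polynomial correction. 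A soft argument, using persistence of the centered walk together with the discretization, gives $o(t^{-1/2})$; the sharp order $t^{-3/2}$ requires a local central limit theorem for the centered walk conditioned to stay positive — whence the non-arithmetic hypothesis — combined with the Guivarc'h--Liu estimates for the semi-direct product. Establishing this local estimate uniformly, and verifying integrability of the limiting functional of the conditioned walk, is where the bulk of the technical effort will lie.
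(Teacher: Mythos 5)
Your parts i), iii)-a) and iii)-b) follow essentially the paper's own route: the organizing identity $\Exp{I_t(\xi)^{-p}}=e^{t\psi(p)}\E^{(p)}\left[I_t(-\xi)^{-p}\right]$ obtained from time reversal and the Esscher transform \eqref{escheerk} is exactly the paper's identity, the drifting cases are handled by monotone convergence with finiteness coming from the Doob-type bound \eqref{mdoob}, and iii)-b) is reduced to ii) by tilting at $p$. The genuine gap is in part ii), on which your iii)-b) and iii)-c) then rest: you propose to apply the Guivarc'h--Liu theorem (Theorem \ref{th3}) directly to the discretization $I_n(\xi)=\sum_{k=0}^{n-1}e^{-\xi_k}A_k$ with the function $x\mapsto x^{-p}$. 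But Theorem \ref{th3} requires $\tilde{\psi}(b)\le C(1+b)^{-\vartheta}$ and $|\tilde{\psi}(b)-\tilde{\psi}(b')|\le C|b-b'|^{\eta}$ on all of $\R_+$, so $\tilde{\psi}$ must be bounded and globally H\"older, and $x^{-p}$ fails both near $0$. This is not a removable technicality: after truncation one must show $\E\left[B_n^{-p}\mathbf{1}_{\{B_n\le \delta\}}\right]=o(n^{-1/2})$, and small values of $B_n$ are produced by the very persistence events that create the $n^{-1/2}$ rate, so this remainder estimate is essentially as hard as the theorem itself. Moreover, Theorem \ref{th3} demands $\E\big[a_0^{\kappa}\big]<\infty$ for some $\kappa>0$, i.e. $\E\big[e^{-\kappa\xi_1}\big]<\infty$, i.e. $\theta^-<0$; Theorem \ref{polinomial} ii) makes no such assumption, and, tellingly, Theorem \ref{funcLev} ii) --- the statement the paper does prove via Theorem \ref{th3} --- explicitly adds the hypothesis $\theta^-<0$. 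The paper avoids both obstacles by a different mechanism: it sandwiches $I_t(\xi)$ between exponential functionals of compound Poisson processes using the Wiener--Hopf construction (Lemma \ref{lemadoney}, inequalities \eqref{cotas} and \eqref{limitedemi}), invokes Lemmas 13--14 of \cite{BPS}, where the $x^{-p}$ asymptotics were already established for compound Poisson environments, and then identifies a single constant $c_1$ through a Cauchy-sequence argument in the Poisson intensity.

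For iii)-c) your outline names the right phenomena but defers the entire proof to ``a local CLT for the walk conditioned to stay positive combined with Guivarc'h--Liu estimates'', which is precisely the work that has to be done; the paper argues differently and more concretely. For the $o(t^{-1/2}e^{t\psi(\tau)})$ bound it tilts by $\tau$, splits the functional at time $t/2$, and uses stationarity and independence of increments to obtain $\Exp{I_t(\xi)^{-p}}\le e^{t\psi(\tau)}\,\E^{(\tau)}\big[I_{t/2}(\xi)^{-(p-\tau)}\big]\,\E^{(\tau)}\big[I_{t/2}(-\xi)^{-\tau}\big]$, then applies part ii) under $\P^{(\tau)}$ to the first factor while the second tends to $0$ by oscillation (no non-arithmeticity needed here). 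For the $O(t^{-3/2}e^{t\psi(\tau)})$ bound it dominates $I_{\lfloor t\rfloor}(\xi)^{-p}$ by $\min_{k\le \lfloor t\rfloor -1}e^{p\xi_k}$ times a factor distributed as $I_1(\xi)^{-p}$, factorizes the expectation, and quotes the known random-walk estimate $\E\big[\min_{k\le n}e^{p\xi_k}\big]\sim Cn^{-3/2}e^{n\psi(\tau)}$ for non-arithmetic walks from \cite{MR1633937}, rather than proving a conditioned local limit theorem from scratch. In summary: i), iii)-a), iii)-b) are sound modulo ii); ii) as proposed does not go through under the stated hypotheses; iii)-c) is a plan rather than a proof.
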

It is important to note that for  any $q>0$ satisfying (\ref{finite psi}), we necessarily have  that $\E \left[ I_t(\xi)^{-q} \right] $ is finite 
for all $t>0$.
Indeed, since $(e^{q \xi_t-t\psi(q)}, t\ge 0)$ is a positive martingale, we deduce from  $L_1$-Doob's 
inequality (see for instance \cite{acciaio2012trajectorial}) and the Esscher transform \eqref{escheerk}, that the following series of inequalities hold: for $t \leq 1$,
\begin{equation}\label{mdoob}
\begin{split}
\Exp{I_t(\xi)^{-q}} &\le t^{-q} \mathbb{E}\left[\sup_{0\le u\le 1} e^{q \xi_u}\right]\le  t^{-q} e^{\psi(q)\lor 0}\mathbb{E}\left[\sup_{0\le u\le 1} e^{q \xi_u-u\psi(q)}\right]\\
&\le  t^{-q} \frac{e^{1+\psi(q)\lor 0}}{e-1}\left(1+\mathbb{E}^{(q)}\Big[ q\xi_1-\psi(q)\Big]\right)=
t^{-q} \frac{e^{1+\psi(q)\lor 0}}{e-1} [1+q \psi'(q)-\psi(q)], 
\end{split}
\end{equation} 
which is finite. The finiteness for $t >1$ follows  from the fact that $I_t(\xi)$ is non-decreasing.

We are now interested in extending the above result for a class of functions which have
polynomial decay and are non-increasing at $\infty$. As we will see below such extension is not straightforward and need more conditions on the exponential 
moments of  the L\'evy process $\xi$.

 For simplicity, we write
$$\mathcal{E}_F( t):=\E\left[F(I_t(\xi))\right],$$
where $F$ belongs to a particular class of continuous functions on $\R_+$ that we will introduce below.  We 
 assume that  the Laplace exponent $\psi$ of $\xi$  is well defined  on the interval $ (\theta^-, \theta^+)$, where 
$$ \theta^-:= \inf \{\lambda <0: \psi(\lambda)<\infty\},$$
and $\theta^+$ is defined as in (\ref{tetam}).  Recall that $\psi$ is a convex function that belongs to $C^\infty((\theta^-, \theta^+))$ with $\psi(0)=0$,  $\psi^\prime(0+)\in [-\infty, \infty)$ 
and $\psi^{\prime\prime}(\lambda)>0,$ for  $\lambda\in(\theta^-, \theta^+)$. Also recall that $\tau\in[0,\theta^+)$ is the position where the minimum of $\psi$ is reached. 

Let  $\mathbf{k}$ be 
a positive constant. We will consider functions $F$ satisfying one of the following conditions:  
There exists $x_0\geq 0$ such that $F(x)$ is non-increasing for $x \geq x_0$, and
\begin{itemize}
\item[({\bf A1})] $F$ satisfies
\begin{equation*} 
 F(x)=\mathbf{k}(x+1)^{-p}\Big[ 1+(1+x)^{-\varsigma}h(x) \Big] ,\qquad \mbox{ for all } x>0,
\end{equation*}
where $0<p \le\tau$, $\varsigma\geq 1$ and  $h$ is a Lipschitz function which is bounded.

\item [({\bf A2})] $F$ is an  H\"older  function with index  $\alpha>0$ satisfying
 \begin{equation*}
 F(x)\leq \mathbf{k} (x+1)^{-p},\qquad \mbox{ for all } x>0,
\end{equation*}
with $ p >\tau.$
\end{itemize}

\begin{theorem}\label{funcLev}Assume that $0<p<\theta^+$.
We have the following five regimes  for the asymptotic behaviour of $\mathcal{E}_F(t)$ for large $t$.
 \begin{enumerate}
 \item[i)] If $\psi'(0+)>0$ and $F$ is a positive and continuous function which is bounded, then 
 $$ \lim_{t \to \infty} \mathcal{E}_F(t)= \mathcal{E}_F(\infty).$$
 \item[ii)] If $\psi'(0+)=0$, 
$F$ satisfies ({\bf A2}) and $\theta^-<0$, 
then 
 there exists a positive constant $c_3$ such that 
 $$ \lim_{t \to \infty} \sqrt{t}   \mathcal{E}_F(t)=c_3.$$
 \item[iii)] Suppose that $\psi'(0+)<0$: 
\begin{enumerate}
 \item[a)]   If $F$ satisfies ({\bf A1}) and $\psi^\prime(p)<0$,
then, 
$$\lim_{t \to \infty} e^{-t\psi(p)}   \mathcal{E}_F(t) = 
{\lim_{t \to \infty} e^{-t\psi(p)}   \mathbf{k}\E\left[ I_t(\xi)^{-p} \right] =} \mathbf{k}\E^{(p)}\left[ I_\infty(-\xi)^{-p} \right].$$
 \item[b)] If $F$ satisfies  ({\bf A1}),  $\psi^\prime(p)=0$ and $\psi^{\prime\prime}(p)<\infty$, 
 then, 
 $$
 \lim_{t \to \infty}\sqrt{t}e^{-t\psi(p)} \mathcal{E}_F(t) = 
 {\lim_{t \to \infty} \sqrt{t}e^{-t\psi(p)}  \mathbf{k}\E\left[ I_t(\xi)^{-p} \right]=} \mathbf{k} c_2 ,
$$
{where $c_2$ has been defined in point iii) b) of Theorem \ref{polinomial}.}
\item[c)] If $F$ satisfies ({\bf A2}), $\psi^\prime(p)>0$ and  $\tau+p<\theta^+$
then there exists a positive constant $c_4$ such that
$$\lim_{t \to \infty} t^{3/2} e^{-t\psi(\tau)} \mathcal{E}_F(t) = c_4. $$
\end{enumerate}
\end{enumerate}
\end{theorem}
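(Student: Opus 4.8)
The plan is to separate the five regimes into two groups according to the shape constraint on $F$. When $F$ is pinned to the pure power, i.e. under (\textbf{A1}), which holds exactly when $p\le\tau$ (regimes iii.a and iii.b), I would deduce the statement directly from Theorem \ref{polinomial}. When $F$ is only dominated by a power, i.e. under (\textbf{A2}), which occurs when $p>\tau$ (regimes ii and iii.c; note that $\psi'(0+)=0$ forces $\tau=0$), the pure-power estimate of Theorem \ref{polinomial} is no longer sharp, and I would instead discretise $I_t(\xi)$ and appeal to the asymptotics of Guivarc'h and Liu \cite{GL}. The two recurring tools are the time-reversal identity $\Exp{g(I_t(\xi))}=\Exp{g(e^{-\xi_t}I_t(-\xi))}$, valid for bounded measurable $g$ since $(\xi_t-\xi_{(t-s)-})_{0\le s\le t}\stackrel{d}{=}(\xi_s)_{0\le s\le t}$, and the Esscher transform \eqref{escheerk}, which for $0<\lambda<\theta^+$ combine into
\begin{equation*}
e^{-t\psi(\lambda)}\Exp{g(I_t(\xi))}=\E^{(\lambda)}\left[e^{-\lambda\xi_t}g\left(e^{-\xi_t}I_t(-\xi)\right)\right],
\end{equation*}
where under $\P^{(\lambda)}$ the process $\xi$ is again L\'evy with exponent $\psi(\lambda+\cdot)-\psi(\lambda)$ and mean $\psi'(\lambda)$.

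Regime i) is immediate: since $\psi'(0+)>0$ the process drifts to $+\infty$, so $I_t(\xi)\uparrow I_\infty(\xi)<\infty$ almost surely, and dominated convergence gives $\mathcal{E}_F(t)\to\mathcal{E}_F(\infty)$ for bounded continuous $F$. For regimes iii.a and iii.b I would use (\textbf{A1}) to write
\begin{equation*}
\mathcal{E}_F(t)=\mathbf{k}\,\Exp{(I_t(\xi)+1)^{-p}}+\mathbf{k}\,\Exp{(I_t(\xi)+1)^{-p-\varsigma}h(I_t(\xi))},
\end{equation*}
and compare the first term with $\mathbf{k}\Exp{I_t(\xi)^{-p}}$. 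A routine interpolation gives $|(x+1)^{-p}-x^{-p}|\le C\,x^{-p-\eta}$ together with $(x+1)^{-p-\varsigma}\le x^{-p-\eta}$ for every $\eta\in(0,1]$, so that, fixing a small $\eta\in(0,\theta^+-p)$, both the gap in the first term and the entire $h$-term are bounded by a constant times $\Exp{I_t(\xi)^{-(p+\eta)}}$. Applying Theorem \ref{polinomial} at the exponent $p+\eta$, which falls in regime iii.a when $\psi'(p)<0$ and in regime iii.c when $\psi'(p)=0$ (here $\psi''(\tau)<\infty$ is used), shows that these contributions are negligible at the scale $e^{t\psi(p)}$, respectively $t^{-1/2}e^{t\psi(p)}$. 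Hence $\mathcal{E}_F(t)$ is asymptotic to $\mathbf{k}\Exp{I_t(\xi)^{-p}}$ at the relevant scale, and the limits in iii.a and iii.b follow from Theorem \ref{polinomial}.

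The substantive cases are ii) and iii.c). My plan is to tilt at the minimiser $\tau$, writing
\begin{equation*}
e^{-t\psi(\tau)}\mathcal{E}_F(t)=\E^{(\tau)}\left[e^{-\tau\xi_t}F(I_t(\xi))\right],
\end{equation*}
under which $\xi$ is centred with finite variance $\psi''(\tau)$ (the assumption $\theta^-<0$ supplying the exponential moment on the negative half-line needed in regime ii). I would then discretise
\begin{equation*}
I_n(\xi)=\sum_{k=0}^{n-1}e^{-\xi_k}\int_k^{k+1}e^{-(\xi_s-\xi_k)}\ud s,
\end{equation*}
exhibiting $I_t(\xi)$ as a functional of the semi-direct product generated by the i.i.d. blocks $\big(\xi_{k+1}-\xi_k,\int_k^{k+1}e^{-(\xi_s-\xi_k)}\ud s\big)$, and invoke \cite{GL} to extract the sharp rate. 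Heuristically, $F(I_t(\xi))$ is of order one precisely when the centred walk stays positive, while the tilt $e^{-\tau\xi_t}$ pins $\xi_t$ to a bounded region; in regime iii.c, where $\tau>0$, these two constraints together have probability of order $t^{-3/2}$, whereas in regime ii, where $\tau=0$ and the tilt is trivial, only the staying-positive event survives, of order $t^{-1/2}$. The local-limit and renewal statements of \cite{GL} turn this heuristic into the constants $c_3$ and $c_4$ and show that they are positive.

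The H\"older regularity in (\textbf{A2}) is what makes this last step work: it lets me replace $F(I_t(\xi))$ by $F(I_{\lfloor t\rfloor}(\xi))$ and, more importantly, bound the discrepancy between $\mathcal{E}_F$ and the pure functional handled by \cite{GL} by a term that still decays at the claimed order, uniformly in $t$. This transfer --- verifying the Cram\'er-type hypotheses of \cite{GL} (encoded by the finiteness of $\psi$ on a two-sided interval, by $\psi''(\tau)<\infty$, and by $\tau+p<\theta^+$) and controlling the H\"older error without degrading the $t^{-1/2}$ or $t^{-3/2}$ order --- is where I expect the real difficulty to lie. By contrast, the regimes governed by (\textbf{A1}) are essentially corollaries of Theorem \ref{polinomial}.
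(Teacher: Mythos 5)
Your handling of regimes i), iii.a) and iii.b) coincides with the paper's. The paper also reduces the (\textbf{A1}) cases to Theorem \ref{polinomial} through the bound $|F(x)-\mathbf{k}x^{-p}|\le M x^{-(1+\varepsilon)p}$ (its Lemma \ref{lemtecaussi}, which your interpolation inequality re-derives), and it disposes of the error term by showing $\E\left[ I_t(\xi)^{-(1+\varepsilon)p}\right]=o\bigl(e^{t\psi(p)}\bigr)$, respectively $o\bigl(t^{-1/2}e^{t\psi(p)}\bigr)$, exactly as you propose. Your plan for regimes ii) and iii.c) --- discretization into i.i.d.\ blocks, Esscher tilt at $\tau$, Guivarc'h--Liu --- is also the paper's strategy, and in regime ii) your unit-mesh version genuinely works: the normalization $\sqrt{t}$ is polynomial, so $\sqrt{t}/\sqrt{\lfloor t\rfloor}\to 1$ and monotonicity of $s\mapsto\mathcal{E}_F(s)$ transfers the discrete limit to continuous time (the paper itself observes that the resulting constant does not depend on the mesh).

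The genuine gap is the discrete-to-continuous passage in regime iii.c). There $\psi(\tau)<0$, so the normalization $e^{-t\psi(\tau)}$ grows exponentially and changes by the factor $e^{-(t-\lfloor t\rfloor)\psi(\tau)}\in\bigl(1,e^{-\psi(\tau)}\bigr]$ across a single unit interval. If the claimed asymptotics $\mathcal{E}_F(t)\sim c_4\,t^{-3/2}e^{t\psi(\tau)}$ holds, then $\mathcal{E}_F(t)/\mathcal{E}_F(\lfloor t\rfloor)\to e^{(t-\lfloor t\rfloor)\psi(\tau)}$, so the discrepancy $\mathcal{E}_F(t)-\mathcal{E}_F(\lfloor t\rfloor)$ is of the \emph{same} order as the main term, not smaller; consequently no H\"older (or any other) estimate can show that replacing $F(I_t(\xi))$ by $F(I_{\lfloor t\rfloor}(\xi))$ costs only $o\bigl(t^{-3/2}e^{t\psi(\tau)}\bigr)$ --- the statement you need is simply false. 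A unit-mesh Guivarc'h--Liu limit plus interpolation therefore yields only two-sided bounds, i.e.\ that $t^{3/2}e^{-t\psi(\tau)}\mathcal{E}_F(t)$ stays between $c\,e^{\psi(\tau)}$ and $c$, and not the existence of the limit $c_4$. The paper's missing ingredient is a \emph{refining} mesh: it discretizes on the grid $\{n/q\}$, proves $n^{3/2}\E^{(\tau)}\bigl[A_n^{\tau}F(B_n)\bigr]\to c(q)$ for every $q$, uses monotonicity to bracket $t^{3/2}e^{-t\psi(\tau)}\mathcal{E}_F(t)$ between $c(q)q^{-3/2}$ times factors $e^{\pm\psi(\tau)/q}$ that tend to $1$, and then lets $q\to\infty$, invoking its Lemma \ref{lemtec} to show that $c(q)q^{-3/2}$ converges and pinches the limit; positivity of $c_4$ also comes from this squeeze. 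This two-parameter argument ($n\to\infty$ first, then $q\to\infty$) is absent from your proposal. Relatedly, you misidentify the role of the H\"older hypothesis in (\textbf{A2}): it is not used for time interpolation at all (monotonicity of $\mathcal{E}_F$ does that), but to verify the regularity condition $|\tilde{\psi}(b)-\tilde{\psi}(b')|\le C|b-b'|^{\eta}$ that Guivarc'h--Liu's theorem imposes on $\tilde{\psi}=F$.
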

The remainder of the paper is structured as follows. In Section 2, we  apply our results to self-similar 
CB-processes, a population model with competition and diffusions whose dynamics are perturbed by a L\'evy random 
environment. In particular, we study the asymptotic behaviour of the probability of extinction and 
explosion  for some classes of self-similar CB-processes in a L\'evy random environment. 
For the population model with competition, we describe the asymptotic behaviour of its mean. For the diffusion in a
L\'evy random environment, we  provide the asymptotic behaviour of the tail probability of its global maximum. Finally, Section 3 is devoted 
to the proofs of Theorems \ref{polinomial} and \ref{funcLev}.

\section{Applications} \label{sectionappli}

\subsection{Self-similar CB-processes in a L\'evy random environment.}

A $[0,\infty]$-valued strong Markov process $Y =(Y_t, t\geq 0)$  with probabilities $(\mathbb{P}_x, \, x\geq 0)$ is called a 
CB-process if it has c\`adl\`ag paths  and its law satisfies the branching
property;  i.e. for any $x,y\geq 0$, $\mathbb{P}_{x+y}$ is equal in law to the convolution of $\mathbb{P}_x$ and $\mathbb{P}_y$.  The law of $Y$ is completely characterized by its Laplace transform
\begin{equation*}
  \Expx{x}{e^{-\lambda Y_t}}=e^{-xu_t(\lambda)},\qquad \forall x>0,\ t\geq 0,
\end{equation*}
where $u$ is a differentiable  function in $t$ satisfying
\begin{equation*}
\frac{\partial u_{t}(\lambda)}{\partial t}=-\Psi(u_{t}(\lambda)), \qquad u_{0}(\lambda)=\lambda.
\end{equation*}
The function $\Psi$ is known as the branching mechanism of $Y$ and satisfies the celebrated 
L\'evy-Khintchine formula
\begin{equation*}\label{lk}
\Psi(\lambda)=-a\lambda+\gamma^2
\lambda^2+\int_{(0,\infty)}\big(e^{-\lambda x}-1+\lambda x{\mathbf 1}_{\{x<1\}}\big)\mu(\ud x),\qquad \lambda\geq 0,
\end{equation*}
where $a\in \mathbb{R}$, $\gamma\geq 0$ and $\mu$ is a measure concentrated on $(0,\infty)$ such that $\int_{(0,\infty)}\big(1\land x^2\big)\mu(\ud x)<\infty.$

Here we are interested in the case where the branching mechanism is stable, that is to say $$\Psi(\lambda)= c_\beta\lambda^{\beta+1}, \qquad \lambda \in \R,$$
for some $\beta\in(-1,0)\cup(0,1]$ and $c_\beta$ is such that $\beta c_{\beta}>0$.  We call its associated CB-process a self-similar CB-process. 
Under this assumption, the process $Y$ can also be defined as the unique non-negative strong solution of the following SDE 
(see for instance \cite{FuLi})
\begin{align*}
 Y_t=&Y_0+\Ind{\beta=1}\int_0^t \sqrt{2c_\beta Y_s}\ud B_s +\Ind{\beta\neq 1}\int_0^t\int_0^{\infty}\int_0^{Y_{s-}}z\widehat{N}(\ud s,\ud z,\ud u), 
\end{align*}
where $B=(B_t,t\geq 0)$ is a standard Brownian motion, 
$N$ is a Poisson random measure independent of $B$ with intensity 
\begin{align*}
\frac{c_\beta\beta(\beta+1)}{\Gamma(1-\beta)}\frac{1}{z^{2+\beta}}\ud s \ud z\ud u,
\end{align*}
 $\widetilde{N}$ is its  compensated version and
 \[
 \widehat{N}(\ud s,\ud z,\ud u) =\left\{ \begin{array}{ll}
 N(\ud s,\ud z,\ud u)  &\textrm{ if $\beta \in(-1,0),$}\\
 \widetilde{N} (\ud s,\ud z,\ud u) & \textrm{ if $\beta \in (0,1).$}
 \end{array} \right. 
 \]

According to Palau and Pardo \cite{PP1}, we can define a self-similar branching process whose dynamics are affected by a L\'evy random environment (SSBLRE) as the unique non-negative strong solution of the stochastic differential equation 
\begin{align}\label{csbplre}
 Z_t=&Z_0+\Ind{\beta=1}\int_0^t \sqrt{2c_\beta Z_s}\ud B_s +\Ind{\beta\neq 1}\int_0^t\int_0^{\infty}\int_0^{Z_{s-}}z\widehat{N}(\ud s,\ud z,\ud u)+\int_0^t  Z_{s-}\ud S_s,
\end{align}
where
\begin{equation}\label{Lenvi}
 S_t=\alpha t+\sigma W_t+\int_0^t\int_{(-1,1)}(e^v-1) \widetilde{N}^{(e)}(\ud s,\ud v)+\int_0^t\int_{\R\setminus (-1,1)}(e^v-1){N}^{(e)}(\ud s,\ud v),
 \end{equation} 
 $\alpha\in \mathbb{R}$, $\sigma\geq 0, W=(W_t,\ t\geq 0)$ is a standard Brownian motion, ${N}^{(e)}$ is a Poisson random measure in $\R_+\times \R$ independent 
 of $W$ with intensity $\ud s\pi(\ud y)$, $\widetilde{N}^{(e)}$ represents its compensated version and $\pi$ is a $\sigma$-finite measure concentrated on 
 $\R\setminus \{0\}$ such that
 $$\int_{\R} (1\wedge v^2)\pi(\ud v)<\infty.$$ 
 In addition, $Z$ satisfies the strong Markov property and, conditioned on the environment, the branching property. 
In what follows, we introduce the auxiliary L\'evy process
\begin{equation}\label{auxiliary}
 K_t=\mathbf{d} t+\sigma W_t+\int_0^t\int_{(-1,1)}v\widetilde{N}^{(e)}(\ud s,\ud v)+\int_0^t\int_{\R\setminus (-1,1)}v N^{(e)}(\ud s,\ud v),
 \end{equation}
where 
\[
\mathbf{d}=\alpha-\frac{\sigma^2}{2}-\int_{(-1,1)}(e^v-1-v)\pi(\ud v).
\]

The Laplace transform {of }$Z_t e^{-K_t}$ can be computed explicitly and provides a closed formula for the probabilities of survival and 
non explosion of $Z$.
  {}Indeed according to Proposition 1 in \cite{PP1}, 
if $(Z_t, t\geq 0)$ is a stable SSBLRE with index $\beta\in(-1, 0)\cup (0,1]$, then for all $z, \lambda>0$ and $t\geq 0$,  
\begin{align}\label{laplacecondicionadol}
\e_z\Big[\exp\Big\{-\lambda Z_t e^{-K_t}\Big\}\Big| K\Big]=\Expo{-z\left(\lambda ^{-\beta}+\beta c_\beta\int_0^t e^{-\beta K_u} \ud u\right)^{-1/\beta}}.
\end{align}
Here, we are interested  in two events which are of immediate concern for the  process $Z$, {\it explosion} and {\it extinction}. 
The event of {\it explosion} at fixed time $t$, is given by  $\{Z_t=\infty\}$, and the event 
$\{\exists \,t >0, Z_t=0 \}$ is referred as {\it extinction}.

\subsubsection{Speed of explosion of SSBLRE}

Let us first study the event of {\it explosion} for self-similar CB-processes in a L\'evy random environment. 
It is important to note that this event has only been studied  
in the case when the random  environment  is driven by a Brownian motion with drift, see \cite{PP}.
From {Equation \eqref{laplacecondicionadol}} and letting $\lambda$ goes to 0, we deduce
\begin{align}\label{eqexplo}
\mathbb{P}_z\Big( Z_t < \infty\Big|K\Big)=
\mathbf{1}_{\{\beta >0\}}+
\mathbf{1}_{\{\beta <0\}}\exp\left\{-z\left(\beta c_\beta\int_0^t e^{-\beta K_u} \ud u\right)^{-1/\beta}\right\}\qquad \textrm{a.s.}
\end{align}
Let us focus on the most interesting case, $\beta \in (-1,0)$. 
{Recall that when there is no environment, a self-similar CB-process explodes at time $t$ with probability  $1-\exp\left\{-z(\beta c_\beta t)^{-1/\beta}\right\}$. Under the presence of a L\'evy random environment  three different regimes appear for the asymptotic behaviour of the non-explosion probability}. We call these regimes {\it subcritical-explosion}, {\it critical-explosion} or 
{\it supercritical-explosion} depending on whether this probability stays positive, converges to zero polynomially fast or converges to 
zero exponentially fast.\\
Before stating our result, let us introduce the Laplace transform of the L\'evy process $K$ by 
\begin{equation}\label{defk}
\kappa(\theta)=\log \E[e^{\theta K_1}], 
\end{equation}
whenever it exists (see discussion on page \pageref{discuLaplace}).

 We assume that  the Laplace exponent $\kappa$ of $K$  is well defined  on the interval $ (\theta_K^-, \theta_K^+)$, where 
\begin{equation*} \theta_K^-:= \inf \{\lambda <0: \kappa(\lambda)<\infty\} \quad \text{and} \quad 
\theta_K^+:= \sup \{\lambda >0: \kappa(\lambda)<\infty\}.\end{equation*}
If $\kappa$ reaches a global minimum on $(0,\theta^+)$, we denote by $\tau$ for its position. As we  see below, the asymptotic behaviour of 
the probability of explosion depends on the sign of 
\[
\mathbf{m}=\kappa^\prime(0+).
\]
\begin{proposition}\label{proexplosion}
Let $(Z_{t}, t\geq 0)$ be the SSBLRE with index $\beta\in(-1,0)$ defined by the SDE 
(\ref{csbplre}) with $Z_0=z>0$, and recall the definition of the random environment $K$ in \eqref{auxiliary}.  Assume that $0<\theta_K^{+}$.
\begin{itemize}
\item[i)] Subcritical-explosion. If $\mathbf{m}>0$, then, for every $z>0$
\begin{equation*}
\underset{t\rightarrow\infty}{\lim}\mathbb{P}_z\Big( Z_t < \infty\Big)=\mathbb{E}\left[ \exp\left\{-z\left(\beta c_\beta\int_0^\infty e^{-\beta K_u} \ud u\right)^{-1/\beta}\right\}\right]
>0. 
\end{equation*}
\item[ii)]  Critical-explosion. If $\mathbf{m}=0$ and $\theta_K^{-}<0$,
then
for every $z>0$ there exists $c_1(z)>0$ such that
\begin{equation*}
\underset{t\rightarrow\infty}{\lim}\sqrt{t} \mathbb{P}_z\Big( Z_t < \infty\Big)=c_1(z).  
\end{equation*}
\item[iii)] Supercritical-explosion. If $\mathbf{m}<0$. Then
for every $z>0$ there exists $c_2(z)>0$ such that
\begin{equation*}
\underset{t\rightarrow\infty}{\lim}t^{\frac{3}{2}} e^{-t\kappa(\tau)} \mathbb{P}_z\Big( Z_t < \infty\Big)=c_2(z). 
\end{equation*}
\end{itemize}
\end{proposition}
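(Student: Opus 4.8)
The plan is to condition on the environment $K$, take expectations, and recognise the conditional non-explosion probability as $\mathcal{E}_F(t)$ for an explicit Lévy process and an explicit bounded function $F$, so that the three regimes of the proposition become three of the five cases of Theorem~\ref{funcLev}.

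First I would take expectations over the environment in \eqref{eqexplo}. For $\beta\in(-1,0)$ this gives
\[
\Prob{Z_t<\infty}=\Exp{\exp\left\{-z\left(\beta c_\beta\int_0^t e^{-\beta K_u}\,\ud u\right)^{-1/\beta}\right\}}.
\]
Setting $\xi_t:=\beta K_t$ (again a Lévy process, being a constant multiple of $K$) and
\[
F(x):=\exp\left\{-z(\beta c_\beta x)^{-1/\beta}\right\},\qquad x\ge 0,
\]
and using $e^{-\xi_u}=e^{-\beta K_u}$, we get $\int_0^t e^{-\beta K_u}\,\ud u=I_t(\xi)$ and hence $\Prob{Z_t<\infty}=\Exp{F(I_t(\xi))}=\mathcal{E}_F(t)$. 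Since $\beta c_\beta>0$ and $-1/\beta>1$, the function $F$ is positive, continuous, bounded by $1$ and Lipschitz on $[0,\infty)$ (its derivative vanishes at both ends), and it decays faster than any polynomial; in particular $F(x)\le\mathbf{k}(1+x)^{-p}$ for every $p>0$ with a suitable $\mathbf{k}=\mathbf{k}(p)$, so $F$ satisfies \textbf{(A2)} for any $p>\tau$ as well as the milder hypothesis of case~i.

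Next I would record the dictionary between $\psi(\lambda):=\log\Exp{e^{\lambda\xi_1}}=\kappa(\beta\lambda)$ and $\kappa$. Differentiating gives $\psi'(0+)=\beta\,\kappa'(0+)=\beta\mathbf{m}$, and, because $\beta<0$, the multiplication reverses signs: $\theta^-=\theta_K^+/\beta<0$ (using $\theta_K^+>0$), $\theta^+=\theta_K^-/\beta$, and the minimiser $\tau_\psi$ of $\psi$ on $(0,\theta^+)$ satisfies $\psi(\tau_\psi)=\kappa(\beta\tau_\psi)=\kappa(\tau)$, where $\tau=\beta\tau_\psi$ is the minimiser of $\kappa$ appearing in the statement. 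With this dictionary the three regimes read off from Theorem~\ref{funcLev}. In the regime $\psi'(0+)>0$ the process $\xi$ drifts to $+\infty$, so $I_t(\xi)\uparrow I_\infty(\xi)<\infty$ almost surely, and bounded convergence (case~i) yields $\mathcal{E}_F(t)\to\Exp{F(I_\infty(\xi))}$, which is exactly the announced positive limit. In the regime $\psi'(0+)=0$, the hypotheses $\theta^-<0$ and \textbf{(A2)} give case~ii, i.e. $\sqrt{t}\,\mathcal{E}_F(t)\to c_3=:c_1(z)$. In the regime $\psi'(0+)<0$, choosing $p>\tau_\psi$ with $\psi'(p)>0$ and invoking case~iii~c) gives $t^{3/2}e^{-t\psi(\tau_\psi)}\mathcal{E}_F(t)\to c_4$, and rewriting $\psi(\tau_\psi)=\kappa(\tau)$ produces the stated rate $t^{3/2}e^{-t\kappa(\tau)}\Prob{Z_t<\infty}\to c_2(z):=c_4$.

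The delicate point, and the one I expect to be the main obstacle, is this last regime. One must ensure that $\psi$ actually attains an \emph{interior} minimum on $(0,\theta^+)$ and that the admissibility constraint $\tau_\psi+p<\theta^+$ of case~iii~c) can be met together with $p>\tau_\psi$; through the dictionary this becomes a condition on the negative exponential moments of $K$ (roughly $\theta^+>2\tau_\psi$, i.e. enough room below $\theta_K^-$), which is where the location of the minimum of $\kappa$ must be used carefully. By contrast the subcritical regime is essentially immediate from bounded convergence, and the critical regime is direct once $\theta_K^-<0$ is invoked so that $\theta^-<0$. The remaining work---identifying the positive constants $c_1(z),c_2(z)$ and keeping track of the sign reversal induced by $\beta<0$---is routine bookkeeping.
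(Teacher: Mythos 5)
Your reduction is exactly the paper's: write $\mathbb{P}_z(Z_t<\infty)=\mathcal{E}_F(t)$ for the L\'evy process $\xi=\beta K$ and the function $F(x)=\exp\{-z(\beta c_\beta x)^{-1/\beta}\}$, check that $F$ is bounded, continuous, non-increasing and satisfies ({\bf A2}) for every admissible $p$, and invoke Theorem \ref{funcLev} points i), ii) and iii) c); the paper's proof consists of precisely this observation. The genuine gap is in the step where you ``read off'' the regimes, and your own dictionary exposes it. You correctly derive $\psi'(0+)=\beta\kappa'(0+)=\beta\mathbf{m}$ and warn that $\beta<0$ reverses signs, but you never apply that reversal to the hypotheses of the proposition: under the hypothesis $\mathbf{m}>0$ of case i), your dictionary gives $\psi'(0+)=\beta\mathbf{m}<0$, which is regime iii) of Theorem \ref{funcLev} (exponential decay), not regime i) (positive limit); symmetrically, the hypothesis $\mathbf{m}<0$ of case iii) gives $\psi'(0+)>0$, i.e.\ regime i). A direct check shows which pairing is correct: if $\mathbf{m}>0$ then $K\to+\infty$ a.s., so $I_t(\xi)=\int_0^t e^{|\beta|K_u}\,\ud u\to\infty$ and the conditional non-explosion probability in \eqref{eqexplo} tends to $0$ almost surely; a growing environment makes explosion certain, so the positive limit occurs exactly when $\mathbf{m}<0$. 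The same sign problem infects your identification of the rate: your $\tau=\beta\tau_\psi$ is negative, hence it cannot be the minimiser of $\kappa$ on $(0,\theta_K^+)$ to which the statement refers; when $\psi'(0+)<0$ (that is, $\mathbf{m}>0$) the exponential rate is $e^{t\inf\kappa}$ with the infimum of $\kappa$ attained in $(\theta_K^-,0)$.

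Consequently your proof, as written, does not establish the statement: the matching it asserts contradicts the identity $\psi'(0+)=\beta\mathbf{m}$ that it itself proves. What your argument actually yields, once the regimes are attached to the correct hypotheses, is the proposition with the conditions of i) and iii) interchanged (and with $\tau$ re-read as the global minimiser of $\kappa$). In other words, the statement as printed in this version of the paper is inconsistent with its own proof; the paper's one-line ``direct application'' of Theorem \ref{funcLev} suppresses exactly the translation you also skipped, and a careful write-up must either prove the corrected statement or report that the literal statement cannot follow from Theorem \ref{funcLev}. On one point you are more careful than the paper: the admissibility constraint of Theorem \ref{funcLev} iii) c) requires some $p$ with $\tau_\psi<p$ and $\tau_\psi+p<\theta^+$, i.e.\ $2\tau_\psi<\theta^+$ (in particular $\theta_K^-<0$), an extra exponential-moment condition on $K$ that appears neither in the proposition's hypotheses nor in its proof; your flagging of it is legitimate, although you leave it unresolved.
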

\begin{proof}{Observe} that the function 
$$F: x \in \R_+ \mapsto \exp ( -x^{-1/\beta} ) $$
is non-increasing, continuous, bounded, and 
satisfies Assumption  (\textbf{A2}) for every positive $p$. Hence Proposition \ref{proexplosion} is a direct application of Theorem 
\ref{funcLev} points i), ii) and iii) c).
\end{proof}

\subsubsection{Speed of extinction of SSBLRE} 
Let us now focus on the {survival} probability.
Throughout this section, we assume that  $\beta\in(0,1]$. 
From {Equation \eqref{laplacecondicionadol}} and taking $\lambda$ go to $\infty$, we get
\begin{align*}
\mathbb{P}_z\Big( Z_t >0\Big|K\Big)=1-
\exp\left\{-z\left(\beta c_\beta\int_0^t e^{-\beta K_u} \ud u\right)^{-1/\beta}\right\}\qquad \textrm{a.s.}
\end{align*}
Similarly as for the explosion probability, the asymptotic behaviour of 
the {probability depends on the sign of 
$\mathbf{m}=\kappa^\prime(0+).$
But in contrast with  the explosion probability, five regimes appear, and a second parameter to take into account is the sign of 
$$\mathbf{m}_1=\kappa^\prime(1). $$

\begin{proposition} \label{proextinction}
Let $(Z_{t}, t\geq 0)$ {be a SSBLRE with index  $\beta\in(0, 1]$} defined by the SDE 
(\ref{csbplre}) with $Z_0=z>0$, and recall the definition of the random environment $K$ in \eqref{auxiliary}. 
Assume that $1<\theta^+_K$.
\begin{enumerate}
\item[i)] Supercritical case. If $\mathbf{m}>0$, then for every $z>0$
{\begin{equation*}
\underset{t\rightarrow\infty}{\lim}\mathbb{P}_z\Big( Z_t >0\Big)=
\mathbb{E}\left[1-\exp\left\{-z\left(\beta c_\beta\int_0^\infty e^{-\beta K_u} \ud u\right)^{-1/\beta}\right\}\right]>0. 
\end{equation*}}
\item[ii)] Critical case. If  $\mathbf{m}=0$
and $\theta^-<0$, 
then for every $z>0$, there exists $c_3(z)>0$ such that 
$$\underset{t\rightarrow\infty}{\lim}\sqrt{t}\P_{z}(Z_t>0) = c_3(z). $$
\item[iii)]
 Subcritical case. Assume that $\mathbf{m}<0$ , then 
\begin{enumerate}
 \item[a)] (Strongly subcritical regime). If $\mathbf{m}_1<0$,
 then there exists $c_1>0$ such that for every ${z>0}$,
$$ \underset{t\rightarrow\infty}{\lim}e^{-t\kappa(1)}\P_{z}(Z_t>0) =c_1  z , $$
 \item[b)]  (Intermediate subcritical regime) If $\mathbf{m}_1=0$, 
 then there exists $c_2>0$ such that for every $z>0$,
$$\underset{t\rightarrow\infty}{\lim}\sqrt{t}e^{-t\kappa(1)}\P_{z}(Z_t>0) = c_2 z , $$
 \item[c)]  (Weakly subcritical regime) If $\mathbf{m}_1>0$, then for every $z>0$, there exists $c_4(z)>0$ such that
$$\underset{t\rightarrow\infty}{\lim}t^{3/2}e^{-t\kappa(\tau)}\P_{z}(Z_t>0) = c_4(z).  $$
\end{enumerate}
\end{enumerate}
\end{proposition}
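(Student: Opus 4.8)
The plan is to recognise $\mathbb{P}_z(Z_t>0)$ as $\mathcal{E}_F(t)$ for an explicit $F$ and a suitably rescaled environment, and then to invoke Theorem \ref{funcLev}, exactly as in the proof of Proposition \ref{proexplosion} but now exploiting all five regimes. Since $\beta>0$, the process $\xi:=\beta K$ is again a L\'evy process, with Laplace exponent $\psi(\lambda)=\log\E[e^{\lambda\beta K_1}]=\kappa(\beta\lambda)$, so that $I_t(\xi)=\int_0^t e^{-\beta K_u}\ud u$. Conditioning on the environment and integrating the almost sure identity for $\mathbb{P}_z(Z_t>0\mid K)$ stated above yields
$$\mathbb{P}_z(Z_t>0)=\E\Big[F\big(I_t(\xi)\big)\Big]=\mathcal{E}_F(t),\qquad F(x):=1-\exp\big\{-z(\beta c_\beta x)^{-1/\beta}\big\}.$$
Writing $\mathbf{k}:=z(\beta c_\beta)^{-1/\beta}$ and $p:=1/\beta$, this is $F(x)=1-e^{-\mathbf{k}x^{-p}}$, which decays polynomially like $\mathbf{k}x^{-p}$ at infinity.

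Next I would set up a dictionary between the hypotheses of the proposition (phrased via $\kappa$) and those of Theorem \ref{funcLev} (phrased via $\psi$). Differentiating $\psi(\lambda)=\kappa(\beta\lambda)$ gives $\psi'(0+)=\beta\mathbf{m}$ and $\psi'(p)=\psi'(1/\beta)=\beta\kappa'(1)=\beta\mathbf{m}_1$; as $\beta>0$ these carry the signs of $\mathbf{m}$ and $\mathbf{m}_1$. Moreover $\theta^+=\theta_K^+/\beta$, so the standing requirement $0<p<\theta^+$ is precisely $1<\theta_K^+$; the position $\tau_\psi$ of the minimum of $\psi$ equals $\tau/\beta$ (where $\tau$ is the minimiser of $\kappa$), whence $\psi(\tau_\psi)=\kappa(\tau)$ and $\psi(p)=\kappa(1)$; finally $\psi''(p)=\beta^2\kappa''(1)<\infty$ because $1\in(\theta_K^-,\theta_K^+)$, and $\theta^-<0\Leftrightarrow\theta_K^-<0$. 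Under this dictionary the five cases of the proposition correspond, in order, to parts i), ii), iii)~a), iii)~b) and iii)~c) of Theorem \ref{funcLev}, and the exponential rates as well as the prefactors linear in $z$ (namely $c_1z$ and $c_2z$, coming from the factor $\mathbf{k}$, which is linear in $z$) read off immediately.

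The crux is to check that $F$ belongs to the classes ({\bf A1}) and ({\bf A2}). The map $x\mapsto F(x)$ is $C^\infty$ on $(0,\infty)$ with $F'(x)=-\mathbf{k}px^{-p-1}e^{-\mathbf{k}x^{-p}}<0$, so $F$ is non-increasing with $x_0=0$, continuous and bounded ($F\to1$ as $x\downarrow0$, $F\to0$ as $x\to\infty$), and all its derivatives vanish as $x\downarrow0$ thanks to the damping factor $e^{-\mathbf{k}x^{-p}}$. For the regimes iii)~a) and iii)~b), where $p\le\tau_\psi$, I would establish ({\bf A1}) with this $\mathbf{k}$, with $\varsigma=1$ and $h(x)=(1+x)\big[F(x)(x+1)^{p}/\mathbf{k}-1\big]$: the expansion $1-e^{-\mathbf{k}x^{-p}}=\mathbf{k}x^{-p}-\tfrac{\mathbf{k}^2}{2}x^{-2p}+O(x^{-3p})$ together with $((x+1)/x)^p=1+p/x+O(x^{-2})$ gives $F(x)(x+1)^{p}/\mathbf{k}-1=O(1/x)$ at infinity — here one uses $p=1/\beta\ge1$, so the $x^{-p}$ contribution is also $O(1/x)$ — so that $h$ is bounded, while near $0$ the flatness of $F$ yields $h(0+)=1/\mathbf{k}-1$ and a bounded derivative, making $h$ Lipschitz on $(0,\infty)$. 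For the regimes ii) and iii)~c), where $p>\tau_\psi$, I would verify ({\bf A2}): boundedness of $F'$ makes $F$ Lipschitz hence H\"older, and the elementary estimates $1-e^{-y}\le y$ and $x^{-p}\le2^p(x+1)^{-p}$ for $x\ge1$ give $F(x)\le C(x+1)^{-p}$.

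With these verifications in hand the proposition follows by applying the matching part of Theorem \ref{funcLev} and translating the conclusions back through $\psi(\tau_\psi)=\kappa(\tau)$, $\psi(p)=\kappa(1)$ and $\mathcal{E}_F(\infty)=\E[1-\exp\{-z(\beta c_\beta I_\infty(\xi))^{-1/\beta}\}]$. I expect the genuine technical obstacle to be the ({\bf A1}) verification, namely controlling the remainder $h$ uniformly: its boundedness forces $\varsigma=1$ and hinges precisely on $p\ge1$, i.e. $\beta\le1$, and its Lipschitz property must be checked across the transition from the flat region near $0$ to the polynomial tail. A secondary point to watch is the side condition $\tau_\psi+p<\theta^+$ demanded by Theorem \ref{funcLev} iii)~c), which here reads $\tau+1<\theta_K^+$ and has to be guaranteed (or imposed) in the weakly subcritical regime, since $1<\theta_K^+$ alone need not force it.
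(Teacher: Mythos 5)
Your proposal follows essentially the same route as the paper's own proof, which is a one-liner: a direct application of Theorem \ref{funcLev} with $(\xi_t)_{t\ge 0}=(\beta K_t)_{t\ge 0}$ and $F(x)=1-\exp\{-z(\beta c_\beta x)^{-1/\beta}\}$; your $\kappa$--$\psi$ dictionary and your verification of ({\bf A1})/({\bf A2}) (including the observation that boundedness of $h$ hinges on $p=1/\beta\ge 1$) correctly fill in the details the paper leaves implicit. Your closing caveat is also well taken: the side condition $\tau+1<\theta_K^+$ required by Theorem \ref{funcLev} iii)~c) for the weakly subcritical regime is not implied by the stated hypothesis $1<\theta_K^+$, a point the paper's terse proof glosses over.
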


\begin{proof}
 This is a direct application of Theorem \ref{funcLev}, with $(\xi_t, t \geq 0) = (\beta K_t, t \geq 0)$ and 
 $F(x)=1-\exp(-z(\beta c_\beta x)^{-1/\beta})$.
\end{proof}
In the strongly and intermediate subcritical cases $a)$ and $b)$, $\E [Z_t] $  provides the  exponential decay factor of 
the survival 
probability which is given by  $\kappa(1)$, and the probability of non-extinction is proportional to the initial state $z$ of 
the population. 
In the weakly subcritical case $c)$, the survival probability decays exponentially with rate  $\kappa(\tau)$, which is 
strictly smaller than  $\kappa(1)$, and $c_4$ may not be proportional to $z$ (it is also the case for $c_1$).
We refer to \cite{MR2532207}
for a result in this vein for discrete branching processes in random environment.

More generally, the results stated above can be compared to the results which appear in the literature of discrete (time and space) 
branching processes in random environment, 
see e.g. \cite{Afanasyev2005, MR1983172, GL}.
In the continuous framework, such results have been established in \cite{BH} for the Feller diffusion case (i.e. $\beta=1$)  
in a Brownian environment, in \cite{PP} for a general CB-process in a Brownian environment, and in 
\cite{BPS} for stable CB-process ($\beta \in (0,1]$) subject to random catastrophes killing a fraction of the population.

\subsection{Population model with competition in a L\'evy random environment}
We now study an extension of the competition model {introduced} in Evans et al. \cite{EHS} and studied by Palau and Pardo 
\cite{PP1}. Following \cite{PP1}, we define a logistic process with competition in a L\'evy random environment, 
$(Z_t,t \geq 0)$, as the unique strong solution of the SDE
\begin{align*}
 Z_t=Z_0+\int_0^t Z_s({r}-kZ_s)\ud s+\int_0^t  Z_{s-}\ud S_s 
\end{align*}
where $r>0$ is the drift, $k>0$ is the competition, and the environment $S$ is given by the L\'evy process {defined} in 
(\ref{Lenvi}). Moreover, the process $Z$  satisfies the  Markov property and {we have}
\begin{equation*}
Z_t=Z_0e^{K_t}\left(1+kZ_0\displaystyle\int_0^t e^{K_s}\ud s\right)^{-1},\qquad t\ge 0, 
\end{equation*}
where $K$ is the L\'evy process defined in (\ref{auxiliary}). 

The following result studies the asymptotic behaviour of $\mathbb{E}_{z}[Z_t]$, where $\mathbb{P}_z$ denotes the law of $Z$ starting 
from $z$. {Before stating our result, let us 
recall the definition of the Laplace transform $\kappa$ of $K$ in \eqref{defk} and that $\tau$ is the position of its global minimum, $\mathbf{m}=\kappa^\prime(0)$ and $\mathbf{m}_1=\kappa^\prime(1)$.} 

\begin{proposition} \label{propsize}
Assume that $1<\theta^+_K$.
For $z>0$, we have the following five regimes for the asymptotic behaviour of $\mathbb{E}_{z}[Z_t]$.
	\begin{enumerate}
		\item[i)] If $\mathbf{m}>0$, then for every $z>0$
		$$ \lim_{t \to \infty} \mathbb{E}_{z}[Z_t]= \frac{1}{k}\Exp{\frac{1}{I_{\infty}(K)}}{>0}.$$
		\item[ii)] If $\mathbf{m}=0$, then  
$$ \mathbb{E}_{z}[Z_t]=O(t^{-1/2}).$$		
		\item[iii)] Suppose that $\mathbf{m}<0$: 
		\begin{enumerate}
			\item[a)]   If  $\mathbf{m}_1<0$,	then,
			$$\lim_{t \to \infty} e^{-t\kappa(1)}  \mathbb{E}_{z}[Z_t] = \mathbb{E}^{(1)}\left[{\frac{z}{1+zkI_{\infty}(-K)}}\right]{>0} ,$$
			where  $\E^{(1)}$ denotes the Esscher transform (\ref{escheerk}) of $K$ with $\lambda=1$.
			\item[b)] If 
			$\mathbf{m}_1=0$, then there exists a positive constant $c(z,k)$ that depends on $z$ and $k$ such that
			$$
			\lim_{t \to \infty}\sqrt{t}e^{-t\kappa(1)}  \mathbb{E}_{z}[Z_t] = c(z,k).
			$$
			\item[c)] If $\mathbf{m}_1>0$ and $\tau+1<\theta^+$ then there exists a positive constant $c_1(z,k)$ that depends on $z$ and $k$ 
			 such that
			$$\lim_{t \to \infty} t^{3/2} e^{-t\kappa(\tau)}  \mathbb{E}_{z}[Z_t] = c_1(z,k). $$
		\end{enumerate}
	\end{enumerate}
	\end{proposition}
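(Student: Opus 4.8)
The plan is to recognise $\E_z[Z_t]$ as an exponential-functional expectation of the type governed by Theorem \ref{funcLev}, in the same spirit as the proof of Proposition \ref{proextinction}; the one genuinely new feature is the factor $e^{K_t}$ carried by the mean. Starting from the explicit representation recalled above,
\[
\E_z[Z_t]=\E\left[\frac{z\,e^{K_t}}{1+kz\int_0^t e^{K_s}\,\ud s}\right],
\]
I would take as candidate the function $F(x)=z/(1+kzx)$, which is positive, bounded by $z$, non-increasing, and decays like $(kx)^{-1}$, so that $p=1$. A direct computation shows that $F$ satisfies (A1) with $\mathbf{k}=1/k$, $\varsigma=1$ and $h(x)=(kz-1)(1+x)/(1+kzx)$ (bounded and Lipschitz), and that $F$ is globally Lipschitz, so it also fits (A2). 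What blocks a one-line reduction, in contrast with the survival probability, is precisely the multiplicative term $e^{K_t}$.

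To remove that factor, the key step is the Esscher transform \eqref{escheerk} of $K$ at $\lambda=1$, which is licit since $1<\theta_K^+$. Writing $e^{K_t}=e^{t\kappa(1)}\,\ud\P^{(1)}/\ud\P|_{\mathcal F_t}$ yields
\[
\E_z[Z_t]=e^{t\kappa(1)}\,\E^{(1)}\left[F\!\left(\int_0^t e^{K_s}\,\ud s\right)\right]=e^{t\kappa(1)}\,\E^{(1)}\big[F(I_t(-K))\big].
\]
Under $\P^{(1)}$ the process $\xi:=-K$ is again a L\'evy process, with Laplace exponent $\psi_1(\lambda)=\kappa(1-\lambda)-\kappa(1)$; hence $\psi_1'(0+)=-\mathbf{m}_1$, $\psi_1'(1)=-\mathbf{m}$, the minimiser of $\psi_1$ is at $\tau_1=1-\tau$, and $\psi_1(1)=-\kappa(1)$, $\psi_1(\tau_1)=\kappa(\tau)-\kappa(1)$. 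The bracket is exactly $\mathcal{E}_F(t)$ for $\xi=-K$ under $\P^{(1)}$, so Theorem \ref{funcLev} applies.

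It then remains to match each regime to the correct case and to restore the prefactor $e^{t\kappa(1)}$. Because the sign of $\psi_1'(0+)=-\mathbf{m}_1$ is opposite to that of $\mathbf{m}_1$, the five regimes are permuted: $\mathbf{m}>0$ (whence $\mathbf{m}_1>0$) falls into case iii)a) of Theorem \ref{funcLev}, $\mathbf{m}=0$ into iii)b), while for $\mathbf{m}<0$ the subcases $\mathbf{m}_1<0,=0,>0$ fall into i), ii) and iii)c) respectively. Using $\psi_1(1)=-\kappa(1)$ and $\psi_1(\tau_1)=\kappa(\tau)-\kappa(1)$, multiplication by $e^{t\kappa(1)}$ turns the exponential produced by $\mathcal{E}_F(t)$ into the announced one, giving the orders $1$, $t^{-1/2}$, $e^{t\kappa(1)}$, $t^{-1/2}e^{t\kappa(1)}$ and $t^{-3/2}e^{t\kappa(\tau)}$ for $\E_z[Z_t]$. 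A reassuring check is the supercritical case: there case iii)a) returns a constant in which the two successive Esscher tilts (at $1$ for $K$, then at $1$ for $-K$) cancel, restoring $\P$ and yielding the clean limit $k^{-1}\Exp{I_\infty(K)^{-1}}$.

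The steps I expect to cost real work are three. First, the hypotheses of each invoked case must be verified for $\xi=-K$ under $\P^{(1)}$ and translated back to $\kappa$ through $\tau_1=1-\tau$ and $\theta_1^{\pm}=1-\theta_K^{\mp}$; this is where the integrability constraint tying $\tau$, $p=1$ and the Laplace abscissae in the weakly subcritical case comes from. Secondly, in the critical regime $\mathbf{m}=0$ one has $\tau_1=1$, so (A1) is used exactly at the borderline $p=\tau_1$, which is presumably why only the order $O(t^{-1/2})$, and not a sharp constant, is claimed there. Finally, the positivity and finiteness of the limiting constants must be argued; for the supercritical limit this reduces to $\Exp{I_\infty(K)^{-1}}<\infty$, which follows from $I_\infty(K)\ge I_1(K)$ together with the moment bound \eqref{mdoob} applied with $q=1$ (available since $1<\theta_K^+$).
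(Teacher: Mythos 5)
Your Esscher-at-$\lambda=1$ reduction is exactly the paper's device for part iii), and your treatment of parts iii)-a) and iii)-b) (cases i) and ii) of Theorem \ref{funcLev} applied to $\xi=-K$ under $\P^{(1)}$, with the two tilts cancelling in the supercritical computation) is correct. The genuine gap is in the hypothesis translation, and it affects parts i), ii) and iii)-c). Writing $\psi_1(\lambda)=\kappa(1-\lambda)-\kappa(1)$, $\tau_1=1-\tau$ and $\theta_1^{+}:=1-\theta_K^{-}$ as you do, every assumption of Theorem \ref{funcLev} that must be verified for $-K$ under $\P^{(1)}$ becomes a condition on the \emph{negative} abscissa $\theta_K^-$, whereas the proposition constrains only $\theta_K^+$ and allows $\theta_K^-=0$. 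Concretely: for parts i) and ii) you invoke cases iii)-a) and iii)-b) of Theorem \ref{funcLev}, which require ({\bf A1}); the expansion in ({\bf A1}) pins the exponent to $p=1$ (it is an equality, not a bound), so the standing hypothesis $p<\theta^+$ becomes $1<1-\theta_K^-$, i.e.\ $\theta_K^-<0$, which is nowhere assumed (part ii) additionally needs $\kappa''(0)<\infty$). For part iii)-c), the condition $\tau_1+p<\theta_1^+$ of Theorem \ref{funcLev} iii)-c) reads $(1-\tau)+1<1-\theta_K^-$, i.e.\ $\theta_K^-<\tau-1$; this neither implies nor is implied by the stated hypothesis $\tau+1<\theta_K^+$ (if $K$ has heavy negative jumps then $\theta_K^-=0$, so your condition fails while the stated one can hold). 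Thus the step you describe as ``where the integrability constraint comes from'' actually produces a \emph{different} constraint, and your argument proves iii)-c) under hypotheses other than the proposition's.

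For comparison, the paper does not run everything through $\P^{(1)}$. Part i) is proved from the time-reversal identity \eqref{evans}, $\E_z[Z_t]=z\E\big[(e^{-K_t}+kzI_t(K))^{-1}\big]$, by monotone convergence; part ii) from the one-sided bound $\E_z[Z_t]\le k^{-1}\E\big[I_t(K)^{-1}\big]$ together with Theorem \ref{polinomial} ii). Neither needs negative exponential moments, and the one-sidedness of that bound, not the borderline $p=\tau_1$ in ({\bf A1}) you suggest, is why only $O(t^{-1/2})$ is claimed in ii). For iii)-b), c) the paper's citation of Theorem \ref{funcLev} iii)-b), c) corresponds to an application to $K$ itself with $p=1$, which is what generates the stated condition $\tau+1<\theta_K^+$; note, however, that this computes $z\E\big[(1+kzI_t(K))^{-1}\big]$, while \eqref{evans} has $e^{-K_t}$ in place of $1$, so even on that reading a comparison step is left implicit in the paper. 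To repair your write-up: keep your argument for iii)-a) and iii)-b); prove i) and ii) as the paper does; and for iii)-c) either add the assumption $\theta_K^-<\tau-1$ to run your route, or justify replacing $e^{-K_t}$ by $1$ in \eqref{evans} so that the application of Theorem \ref{funcLev} to $K$ under the stated hypothesis becomes legitimate.
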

	
\begin{proof}We first recall from Lemma II.2 in \cite{MR1771664} that the  time reversal process $(K_t-K_{(t-s)^-}, 0\le s\le t)$ 
 has the same law as $(K_s, 0\le s\le t)$. Then
\begin{equation}\label{timereversal}
\begin{split}
e^{-K_t}I_t(-K)=e^{-K_t}\int_0^t e^{K_{t-s}}{\ud s}=\int_0^{t}e^{-(K_t-K_{t-s})}{\ud s}\stackrel{(d)}{=}\int_0^t e^{-K_s}{\ud s}=I_t(K),
\end{split}
\end{equation}
and
$$ (e^{-K_t},e^{-K_t}I_t(-K))\stackrel{(d)}{=} (e^{-K_t},I_t(K)), $$
{where $\stackrel{(d)}{=}$ stands for equality in law.}
The above implies that
\begin{equation}\label{evans}
\mathbb{E}_z[Z_t]=z\mathbb{E}\left[\left(e^{-K_t}+kz e^{-K_t}\displaystyle\int_0^t e^{K_s}\ud s\right)^{-1}\right]=z\mathbb{E}\left[\left(e^{-K_t}+kz I_t(K)\right)^{-1}\right].
\end{equation}
Let us now prove part i). Assume that $\mathbf{m}>0$, 
then $K$ drifts to $\infty$ and $e^{-K_t}$ converges to 0 as $t$ goes to $\infty$. By Theorem 1 in \cite{MR2178044}, $I_t(K)$ converges 
a.s. to $I_\infty(K)$, a non-negative and finite limit as $t$ goes to $\infty$. We observe that the  result follows from identity (\ref{evans}) 
and the  Monotone Convergence Theorem.

Part ii) follows from the inequality
\[
\mathbb{E}_z[Z_t]=z\mathbb{E}\left[\left(e^{-K_t}+kz I_t(K)\right)^{-1}\right]\le \mathbb{E}\left[\left(k I_t(K)\right)^{-1}\right], 
\]
and Theorem \ref{polinomial} part (ii).

Finally, we prove part iii). Observe  by applying the Esscher transform (\ref{escheerk}) with $\lambda=1$ that
\[
\mathbb{E}_z[Z_t]=ze^{\kappa(1)t}\mathbb{E}^{(1)}\left[\left(1+kz \displaystyle\int_0^t e^{K_s}\ud s\right)^{-1}\right].
\]
Part iii)-a) follows by observing that under the probability 
measure $\P^{(1)}$, the process $K$ is a L\'evy process with  mean  $\E^{(1)}[K_1]=\kappa'(1)\in (-\infty, 0)$. We then conclude as in the proof of part i)
by showing that $ {\E^{(1)} [ (1+kzI_{t}(-K))^{-1} ],} $  converges to 
{$ \E^{(1)} [ (1+kzI_{\infty}(-K))^{-1} ],$} as $t$ increases.

Finally parts iii)-b) and c) follows {from} a direct application of Theorem  (\ref{funcLev}) parts iii)-b) and c), respectively, with the function 
$F: x\in \mathbb{R}_+\mapsto  z(1+kzx)^{-1}.$
 \end{proof}

\subsection{Diffusions in a L\'evy  random environment}
Let $(V(x),x\in\R)$ be a stochastic process defined on $\R$ such that $V(0)=0$. 
As presented in the introduction,
a diffusion process $X=(X(t),t\geq 0)$ in 
a random potential $V$ is a diffusion whose conditional generator given $V$ is
$$\frac{1}{2} e^{V(x)}\frac{\ud}{\ud x}\left(e^{-V(x)}\frac{\ud}{\ud x}\right).$$
It is well known that $X$ may be constructed from a Brownian motion through suitable changes of scale and time, see Brox \cite{Brox}.

The problem that we would like to study is the following: How fast does $\P(\max_{t\geq 0}X(t)>x)$ decay as $x$ go to infinity? 
In order to make our analysis more tractable, we consider $(\xi_t,t\geq 0)$ and $(\eta_t,t\geq 0)$ two independent L\'evy processes,
and we define

\[
V(x) = \left\{ \begin{array}{lcr}
-\xi_x & \mbox{if} & x\geq 0 \\
-\eta_{-x} & \mbox{if} & x\leq 0.
\end{array}
\right.
\]
We want to determine the asymptotic behaviour of
$$\Prob{\max_{s\ge 0}X(s)>t}=\Exp{\frac{I_{\infty}(\eta)}{I_{\infty}(\eta)+I_t(\xi)}}.$$
We assume that $\eta$ drifts to $\infty$, and recall the notations of Section \ref{section1} for the Laplace exponent 
$\psi$ of $\xi$, and for $\theta^+$, $\theta^-$ and $\tau$.

\begin{proposition}Assume that $1<\theta^+$.
	\begin{enumerate}
		\item[i)] If $\psi'(0+)>0$, then 
		$$ \lim_{t \to \infty} \Prob{\underset{s\geq 0}{\max}X(s)>t}= \Exp{\frac{I_{\infty}(\eta)}{I_{\infty}(\eta)+I_{\infty}(\xi)}}{>0}.$$
		\item[ii)] If $\psi'(0+)=0$, 
		then there exists a positive constant $C_1$ that depends on the law of $I_{\infty}(\eta)$ such that 
		$$ \lim_{t \to \infty} \sqrt{t}    \Prob{\underset{s\geq 0}{\max}X(s)>t}=C_1.$$
		\item[iii)] Suppose that $\psi'(0+)<0$: 
		\begin{enumerate}
			\item[a)]   If  $\psi^\prime(1)<0$,
			then there exists 
			a positive constant $C_2$ that depends on the law of $I_{\infty}(\eta)$ such that,
			$$\lim_{t \to \infty} e^{-t\psi(1)}   \Prob{\underset{s\geq 0}{\max}X(s)>t} = C_2.$$
			\item[b)] If 
			$\psi^\prime(1)=0$ and $\psi^{\prime\prime}(1)<\infty$,
			then there exists a positive constant $C_3$ 
			that depends on the law of $I_{\infty}(\eta)$  such that
			$$
			\lim_{t \to \infty}\sqrt{t}e^{-t\psi(1)} \Prob{\underset{s\geq 0}{\max}X(s)>t} = C_3 .
			$$
			\item[c)] If  $\psi^\prime(1)>0$, and $\tau+1<\theta^+$, then 
			$$\lim_{t \to \infty}  \Prob{\underset{s\geq 0}{\max}X(s)>t} = o(t^{-1/2} e^{-t\psi(\tau)}). $$
			Moreover, if the process $\xi$ is non-arithmetic (or non-lattice) then there exists a positive constant $C_4$ 
			that depends on the law of $I_{\infty}(\eta)$  such that
			$$\lim_{t \to \infty} t^{3/2} e^{-t\psi(\tau)} \Prob{\underset{s\geq 0}{\max}X(s)>t} = C_4. $$
		\end{enumerate}
	\end{enumerate}
	Furthermore, if there exists a positive $\eps$ such that 
		$$ \E[I_{\infty}(\eta)^{{1+\eps}}]<\infty, $$
		then 
		$$ C_i=c_i\E[I_{\infty}(\eta)], \quad i \in \{2,3\}, $$
		where $(c_i,  i \in \{2,3\})$ do not depend on the law of $I_{\infty}(\eta)$.
\end{proposition}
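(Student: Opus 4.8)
The plan is to reduce the statement to a direct application of Theorem \ref{funcLev} with the parameter $p=1$. Since $\xi$ and $\eta$ are independent, I would condition on the trajectory of $\xi$ and integrate out $\eta$: writing $Y:=I_\infty(\eta)$ and
\[
F(x):=\E\left[\frac{Y}{Y+x}\right],\qquad x\ge 0,
\]
Fubini's theorem (the integrand lies in $[0,1]$) gives
\[
\Prob{\max_{s\ge 0}X(s)>t}=\Exp{\frac{I_\infty(\eta)}{I_\infty(\eta)+I_t(\xi)}}=\E\big[F(I_t(\xi))\big]=\mathcal E_F(t).
\]
The function $F$ is positive, continuous (indeed smooth on $(0,\infty)$), bounded by $1$, non-increasing, with $F(0)=1$ and $F(x)\to0$ as $x\to\infty$, so it is of the type handled by Theorem \ref{funcLev}. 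The value $p=1$ is forced by the tail of $F$, and it is exactly the reason why the five regimes of the proposition are governed by the signs of $\psi'(0+)$ and $\psi'(1)$: recalling that $\tau$ is the minimiser of $\psi$, the case $\psi'(1)<0$ means $1<\tau$, $\psi'(1)=0$ means $\tau=1$ and $\psi'(1)>0$ means $\tau<1$, which lines up with the conditions $p<\tau$, $p=\tau$, $p>\tau$ separating Assumptions (\textbf{A1}) and (\textbf{A2}).

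The core of the work is therefore to check that $F$ satisfies the right hypothesis in each regime. For the expansion at infinity I would write, for $x\ge 0$,
\[
(x+1)F(x)=\Exp{\frac{(x+1)Y}{Y+x}}=\E[Y]-\Exp{\frac{Y(Y-1)}{Y+x}},
\]
which yields
\[
F(x)=\frac{\E[Y]}{x+1}\left(1-\frac{1}{\E[Y]}\Exp{\frac{Y(Y-1)}{Y+x}}\right).
\]
In the cases iii) a) and iii) b), where $p=1\le\tau$, this identifies the leading constant $\mathbf{k}=\E[Y]=\E[I_\infty(\eta)]$ in Assumption (\textbf{A1}); the remainder $h(x)=-\E[Y]^{-1}(1+x)^{\varsigma}\E[Y(Y-1)/(Y+x)]$ then has to be shown bounded and Lipschitz with $\varsigma\ge 1$, and this is where an integrability condition on $I_\infty(\eta)$ is needed, since $\E[Y(Y-1)/(Y+x)]$ decays like a negative power of $x$ whose order is controlled by the moments of $Y$. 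In the cases i), ii) and iii) c), where $\tau<1$ (or $\tau=0$), it is Assumption (\textbf{A2}) that is relevant: there it suffices to bound $F(x)\le \E[Y^{p}]\,x^{-p}$ for a suitable $p>\tau$ and to verify that $F$ is Hölder, the latter following from the regularity of the law of $I_\infty(\eta)$, in particular its fast decay near the origin which controls $1-F(x)=x\,\E[(Y+x)^{-1}]$ as $x\downarrow0$.

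With these verifications in hand, each item follows by quoting Theorem \ref{funcLev}: part i) gives $\mathcal E_F(\infty)=\Exp{I_\infty(\eta)/(I_\infty(\eta)+I_\infty(\xi))}$; parts ii) and iii) c) give the $\sqrt{t}$ and $t^{3/2}e^{-t\psi(\tau)}$ rates with constants $C_1$ and $C_4$ depending on the whole law of $I_\infty(\eta)$ (because in the (\textbf{A2}) regimes the dominant contribution comes from small values of $I_t(\xi)$, where the full shape of $F$ matters); and parts iii) a), b) give the rates $e^{-t\psi(1)}$ and $\sqrt{t}e^{-t\psi(1)}$. For the latter two, Theorem \ref{funcLev} iii) a), b) returns the limits $\mathbf{k}\,\E^{(1)}[I_\infty(-\xi)^{-1}]$ and $\mathbf{k}\,c_2$; since $\mathbf{k}=\E[I_\infty(\eta)]$, this produces exactly the factorisation $C_i=c_i\,\E[I_\infty(\eta)]$ with $c_i$ depending only on $\xi$, which is the content of the final assertion, valid once the moment hypothesis $\E[I_\infty(\eta)^{1+\eps}]<\infty$ guarantees the (\textbf{A1}) expansion above. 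The main obstacle I anticipate is precisely the (\textbf{A1}) verification: controlling the remainder $\E[Y(Y-1)/(Y+x)]$ uniformly in $x$ with the required extra power and Lipschitz regularity, which forces the moment assumption and is the only genuinely non-mechanical step.
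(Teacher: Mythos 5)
Your overall reduction $\Prob{\max_{s\ge 0}X(s)>t}=\E[F(I_t(\xi))]$ with the \emph{averaged} function $F(x)=\E[Y/(Y+x)]$, $Y=I_\infty(\eta)$, is a genuinely different route from the paper's, and it is precisely where the argument breaks down: this $F$ need not satisfy (\textbf{A1}) or (\textbf{A2}) under the proposition's hypotheses, and the verification you sketch fails even under the extra moment hypothesis. Concretely, (\textbf{A1}) requires $h$ bounded with $\varsigma\geq 1$, i.e. $(1+x)^{\varsigma}\E\left[Y(Y-1)/(Y+x)\right]$ bounded; since $(1+x)\E\left[Y(Y-1)/(Y+x)\right]\to\E[Y(Y-1)]$ as $x\to\infty$ (monotone convergence on $\{Y\geq1\}$, bounded convergence on $\{Y<1\}$), boundedness forces $\E[Y^2]<\infty$. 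Under $\E[Y^{1+\eps}]<\infty$ with $\eps<1$ the remainder only decays like $x^{-\eps}$, so your claim that the $(1+\eps)$-moment hypothesis ``guarantees the (\textbf{A1}) expansion'' is incorrect. Worse, the main statements iii) a), b) impose \emph{no} moment condition on $I_\infty(\eta)$, and $\E[I_\infty(\eta)]=\int_0^\infty\E[e^{-\eta_s}]\,\ud s$ can be infinite; in that case $xF(x)\to\infty$, so $F$ has neither the $(x+1)^{-1}$ expansion of (\textbf{A1}) nor the bound $F(x)\le\mathbf{k}(x+1)^{-p}$ with $p>\tau>1$ of (\textbf{A2}) (note $F(x)\geq \P(Y\geq1)/(1+x)$), and the approach gives nothing. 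The same defect appears in ii) and iii) c): the bound $F(x)\leq\E[Y^{p}]x^{-p}$ needs $\E[Y^{p}]<\infty$, and H\"older continuity near $0$ needs control of $\E[(Y+x)^{-1}]$; neither holds for an arbitrary $\eta$ drifting to $+\infty$ (heavy-tailed $I_\infty(\eta)$ can make $F$ decay slower than any power, and mass of $Y$ near $0$ can destroy the H\"older bound).

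The paper avoids all of this by keeping the two sources of randomness separate: by independence, $\Prob{\max_{s\geq0}X(s)>t}=\E[Y f(Y,t)]$ with $f(a,t)=\E[(a+I_t(\xi))^{-1}]$, and Theorems \ref{polinomial} and \ref{funcLev} are applied for each \emph{fixed} $a>0$ to $F_a(x)=a/(a+x)$, which satisfies (\textbf{A1})/(\textbf{A2}) exactly, with no condition on the law of $Y$. The limit is then pulled inside the expectation over $Y$ by a generalized dominated convergence argument (Pratt-type): $a\,t^{1/2}f(a,t)\leq a\,t^{1/2}f(0,t)$ and $\E[Y\,t^{1/2}f(0,t)]\to c_1\E[Y]$, giving $C_1=\E[Yc_1(Y)]$, and similarly in the other regimes. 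Finally, the factorization $C_i=c_i\E[Y]$, $i\in\{2,3\}$, is obtained not from an expansion of the averaged $F$ but from the elementary inequality $\frac{Y}{I_t(\xi)}-\frac{Y}{Y+I_t(\xi)}\leq\left(\frac{Y}{I_t(\xi)}\right)^{1+\eps}$ combined with $\E[I_t(\xi)^{-(1+\eps)}]=o\big(\E[I_t(\xi)^{-1}]\big)$ in regimes iii) a), b); this is the only place the $(1+\eps)$-moment is used, and it suffices there. To salvage your route you would have to impose moment and small-ball assumptions on $I_\infty(\eta)$ that the proposition does not make.
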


\begin{proof}
	Since $\eta$ and $\xi$ are independent, we have
	$$\Prob{\underset{s\geq 0}{\max}X(s)>t}=\Exp{I_{\infty}(\eta)f(I_{\infty}(\eta),t)},\qquad t>0$$
	where
	$$f(a,t)=\Exp{(a+I_t(\xi))^{-1}},\qquad a,t>0.$$
	The {result} follows from an application of Theorems \ref{polinomial} and \ref{funcLev} with the function 
	\[
	F: x\in \mathbb{R}_+\mapsto  z(a+x)^{-1}.
	\]
	We only prove case ii), {as} the others are analogous. By Theorem \ref{funcLev}  there exists $c_1(a)>0$ such that 
	$$\underset{t\rightarrow\infty}{\lim}t^{1/2}f(a,t)=c_1(a).$$
	On the other hand, by Theorem 1, there exists $c_1$ such that 
	$$\underset{t\rightarrow\infty}{\lim}t^{1/2}f(0,t)=c_1.$$
	Let us define $G_t(a)=at^{1/2}f(a,t)$, and $G_t^0(a)=at^{1/2}f(0,t)$. Observe that 
	$$G_t(a)\leq G_t^0(a), \qquad \mbox{ for all }t,a\geq 0$$
	and 
	$$\underset{t\rightarrow\infty}{\lim}\Exp{G_t^0(I_{\infty}(\eta))}=c_1\Exp{I_{\infty}(\eta)}.$$
	Then, by the Dominated Convergence Theorem (see for instance \cite{dudley2002real} problem 12 p. 145),
	$$ \lim_{t \to \infty} \sqrt{t}    \Prob{\underset{s\geq 0}{\max}X(s)>t}=\underset{t\rightarrow\infty}{\lim}
	\Exp{G_t(I_{\infty}(\eta))}=\Exp{I_{\infty}(\eta)c_1(I_{\infty}(\eta))}.$$
	We complete the proof for the existence of the limits by observing that 
	$$0<C_1=\Exp{I_\infty(\eta)c_1(I_\infty(\eta))}\leq c_1\Exp{I_\infty(\eta)}<\infty.$$
	The last part of the proof consists in justifying the form of the constants $C_2$ and $C_3$ 
	under the additional condition $ \E[I_{\infty}(\eta)^{1+\eps}]<\infty$ for a positive ${\varepsilon}$. 
		For every $0\leq \eps \leq 1$, we have
		\begin{multline*}
		\frac{I_\infty(\eta)}{I_t(\xi)} -\frac{I_\infty(\eta)}{I_\infty(\eta)+I_t(\xi)}= 
		\frac{I_\infty(\eta)}{I_t(\xi)}   \frac{I_\infty(\eta)}{I_\infty(\eta)+I_t(\xi)} 
		\leq  \frac{I_\infty(\eta)}{I_t(\xi)}   \left(\frac{I_\infty(\eta)}{I_\infty(\eta)+I_t(\xi)}\right)^\eps 
		\leq  \left(\frac{I_\infty(\eta)}{I_t(\xi)}\right)^{1+\eps} 
		\end{multline*}
		Hence
		$$ 0 \leq \E\left[\frac{I_\infty(\eta)}{I_t(\xi)} -\frac{I_\infty(\eta)}{I_\infty(\eta)+I_t(\xi)}\right]
		\leq \E[(I_\infty(\eta))^{1+\eps}]\E\left[ \frac{1}{(I_t(\xi))^{1+\eps}} \right] .$$
		But from point iii)-c) of Theorem \ref{polinomial} and Equation \eqref{limit1} in the proof of Theorem \ref{funcLev}, we know that in the cases 
		iii)-a) and iii)-b), 
		$$ \E\left[ I_t(\xi)^{-(1+\eps)} \right]=o \left( \E\left[ I_t(\xi)^{-1} \right] \right). $$ 
		This ends the proof.
\end{proof}	

\section{Proofs of Theorems \ref{polinomial} and \ref{funcLev}.}
This section is dedicated to the proofs of the main results of the paper. 

We first prove Theorem  \ref{polinomial}. The proof of part ii) is based  on the following approximation technique.  Let $(N_t\q, t\ge 0)$ be a Poisson process with intensity $q>0$, which is  independent of the L\'evy process $\xi$, and  denote by 
$(\tau_n^q)_ {n\ge 0}$ its   sequence of jump times with the convention that $\tau^q_0=0$. 
For simplicity, we also introduce for $n\ge 0$,
$$\xi^{(n)}_t=\xi_{\tau_n^q+t}-\xi_{\tau_n^q},\qquad t\ge 0.$$

\noindent For $n\ge 0$, we define the following random variables
$${S}_n\q:=\xi_{\tau_n^q}, \qquad  {M}_n\q:= \sup_{\tau_n^q\leq t<\tau_{n+1}^q}\xi_t\qquad \textrm{and} \qquad {I}_n\q:= \inf_{\tau_n^q\leq t<\tau_{n+1}^q}\xi_t.$$
Observe that $({S}_n\q, n\ge 0)$ is a random walk with step distribution given by $\xi_{\tau_1^q}$ and that  $\tau_1^q$ is an exponential r.v. with parameter $q$ which is  independent of $\xi$.

Similarly for the process $\xi^{(n)}$, we also introduce
\[
m_n\q:=  \sup_{ t<\tau_{n+1}^q-\tau_n^q}\xi^{(n)}_t\quad \textrm{and} \quad i_n\q:= \inf_{t<\tau_{n+1}^q-\tau_n^q}\xi^{(n)}_t.
\]
\begin{lemma}\label{lemadoney}
	Using the above notation we have
	$${M}_n\q={S}_n^{(+, q)}+m_0\q, \qquad {I}_n\q={S}^{(-, q)}_n+i_0\q, \qquad n\geq 0$$
	where each the processes ${S}^{(+, q)}=({S}_n^{(+, q)},n\geq 0)$ and ${S}^{(-, q)}=({S}^{(-, q)}_n,n\geq 0)$ are random walks with the same distribution as $S\q$. Moreover ${S}^{(+, q)}$ and $m_0\q$ are independent, as are ${S}^{(-, q)}$ and $i_0\q$.
\end{lemma}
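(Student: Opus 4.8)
The plan is to reduce the identity to a pathwise decomposition of \(\xi\) relative to the Poisson times, and then to exploit the fact that the Poisson process chops \(\xi\) into i.i.d.\ pieces, so that an exchangeability (order-reversal) argument moves the relevant extremum from the last piece back onto the first. Throughout, the asserted identities are to be read as equalities in distribution for each fixed \(n\). Concretely, I would first record the pathwise decomposition: fix \(n\ge 0\) and write any \(t\in[\tau_n^q,\tau_{n+1}^q)\) as \(t=\tau_n^q+s\) with \(0\le s<\tau_{n+1}^q-\tau_n^q\). Since \(\xi_t=\xi_{\tau_n^q}+(\xi_{\tau_n^q+s}-\xi_{\tau_n^q})={S}_n\q+\xi^{(n)}_s\), taking the supremum and the infimum over the interval yields, almost surely,
\[
{M}_n\q={S}_n\q+m_n\q,\qquad {I}_n\q={S}_n\q+i_n\q,\qquad n\ge 0,
\]
the extrema being finite because \(\xi\) is c\`adl\`ag and \(\tau_{n+1}^q-\tau_n^q<\infty\) a.s.

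Next I would identify the independence structure. Let \(L_k=\tau_k^q-\tau_{k-1}^q\) be the Poisson gaps and let \(P_k=(\xi^{(k-1)}_s,\,0\le s\le L_k)\) be the piece of trajectory over the \(k\)-th interval. The variables \(L_k\) are i.i.d.\ \({\rm Exp}(q)\) and, crucially, independent of \(\xi\); hence, conditionally on \((L_k)_{k\ge 1}\), the pieces \(P_k\) are independent by the independence of the increments of \(\xi\) over disjoint intervals, and each \(P_k\) is distributed as \(\xi\) run over an independent \({\rm Exp}(q)\) time. Since the \(L_k\) are themselves i.i.d., the family \((P_k)_{k\ge 1}\) is i.i.d. Writing \(\Delta(P_k)\) for the total increment of the \(k\)-th piece, we have \({S}_n\q=\sum_{k=1}^n\Delta(P_k)\), a function of \(P_1,\dots,P_n\), whereas \(m_n\q=\sup P_{n+1}\) and \(i_n\q=\inf P_{n+1}\) are functions of the single, disjoint piece \(P_{n+1}\). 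Thus \(m_n\q\) (resp.\ \(i_n\q\)) is independent of \({S}_n\q\) and is distributed as \(m_0\q\) (resp.\ \(i_0\q\)).

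To reach the stated form with a \emph{single} \(m_0\q\) and a genuine random walk, I would invoke the exchangeability of \(P_1,\dots,P_{n+1}\). Reversing their order, \((P_1,\dots,P_{n+1})\stackrel{(d)}{=}(P_{n+1},\dots,P_1)\), so that
\[
{M}_n\q=\sum_{k=1}^n\Delta(P_k)+\sup P_{n+1}\stackrel{(d)}{=}\sum_{j=2}^{n+1}\Delta(P_j)+\sup P_1=:{S}_n^{(+,q)}+m_0\q,
\]
and symmetrically \({I}_n\q\stackrel{(d)}{=}{S}_n^{(-,q)}+i_0\q\) with \(\inf\) in place of \(\sup\). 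Here \({S}^{(+,q)}=\big(\sum_{j=2}^{n+1}\Delta(P_j)\big)_{n\ge 0}\) is a random walk whose step law is that of \(\Delta(P_1)=\xi_{\tau_1^q}\), hence \({S}^{(+,q)}\stackrel{(d)}{=}{S}\q\), and it is independent of \(m_0\q=\sup P_1\) since it only involves \(P_2,P_3,\dots\); the same holds for \({S}^{(-,q)}\) and \(i_0\q\).

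The main obstacle is conceptual rather than computational: one must recognize that the identity is distributional (for each fixed \(n\)) and is powered by the exchangeability of the i.i.d.\ pieces, since a literal joint-in-\(n\) pathwise identity fails — in the genuine sequence \(({M}_n\q)_{n}\) the piece \(P_{n+1}\) contributes to \(M_n\q\) through its supremum but to every later \(M_m\q\) through its total increment. The technical heart is therefore the rigorous proof that \((P_k)_{k\ge 1}\) is i.i.d., obtained by conditioning on the Poisson gaps and using the stationarity and independence of the increments of \(\xi\); once this is in place, the reversal step is immediate.
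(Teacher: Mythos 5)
Your exchangeability argument correctly proves a fixed-$n$ distributional identity, but that is strictly weaker than what Lemma \ref{lemadoney} asserts, and the point where you diverge from the paper is exactly the point where you declare the lemma's actual content false. The lemma is a pathwise statement: there exist, on the same probability space, random walks $S^{(+,q)}$ and $S^{(-,q)}$, distributed as $S\q$ and independent of $m_0\q$ (resp. $i_0\q$), such that $M_n\q=S_n^{(+,q)}+m_0\q$ and $I_n\q=S_n^{(-,q)}+i_0\q$ hold almost surely for all $n\ge 0$ simultaneously. This stronger form is not optional: the lemma feeds into the pathwise inequalities (\ref{cotas}), where the whole time-changed trajectory $Y^{(+,q)}_s=S^{(+,q)}_{N_s\q}$ appears inside $\int_0^t e^{-Y_s^{(+,q)}}\ud s$ together with the single variable $m_0\q$, and expectations are then factorized using the independence of $m_0\q$ from the \emph{entire} walk $S^{(+,q)}$. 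Your version — equality in law separately for each $n$, obtained from an order-reversal map that itself depends on $n$ — pins down only the one-dimensional marginals of $(M_n\q)_{n\ge 0}$, and therefore cannot yield (\ref{cotas}) nor the factorization $\E\bigl[e^{p m_0\q}\bigl(\int_0^t e^{-Y_s^{(+,q)}}\ud s\bigr)^{-p}\bigr]=\E\bigl[e^{p m_0\q}\bigr]\E\bigl[\bigl(\int_0^t e^{-Y_s^{(+,q)}}\ud s\bigr)^{-p}\bigr]$ used in the proof of Theorem \ref{polinomial} ii).

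The missing idea is the Wiener--Hopf factorization at an independent exponential time, which is what the paper's proof uses and what your proof never invokes: $m_0\q$ and $\xi_{\tau_1^q}-m_0\q$ are independent, and the latter is distributed as $i_0\q$. Telescoping in your own notation,
\[
M_n\q=m_0\q+\sum_{k=1}^n\Bigl[\bigl(\Delta(P_k)-\sup P_k\bigr)+\sup P_{k+1}\Bigr],
\]
and Wiener--Hopf applied to each of the i.i.d.\ pieces shows that the family $\{\sup P_k,\ \Delta(P_k)-\sup P_k\}_{k\ge 1}$ consists of mutually independent random variables. Hence the bracketed steps are i.i.d., each distributed like $\xi_{\tau_1^q}$ (again by Wiener--Hopf, being an independent sum of an $i_0\q$-distributed and an $m_0\q$-distributed term), and all of them are independent of $m_0\q=\sup P_1$; this is precisely the paper's display $M_1\q=m_0\q+\{\xi_{\tau_1^q}-m_0\q\}+m_1\q$, iterated in $n$. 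Your heuristic objection — that $P_{n+1}$ enters $M_n\q$ through its supremum but enters every later $M_m\q$ through its total increment — is the very obstruction this factorization removes, since the supremum of a piece is independent of its residual increment. So the correct conclusion is not that the joint-in-$n$ pathwise identity fails, but that exchangeability is the wrong tool to reach it; to repair your proof, replace the order reversal by the Wiener--Hopf splitting of each piece.
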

The proof of this lemma is based on the Wiener-Hopf 
factorisation (see Equations (4.3.3) and (4.3.4) in \cite{Doney}).
It follows from similar arguments as those used in the proof of Theorem IV.13 in \cite{Doney}, 
which considers the case when the exponential random variables 
are jump times of the process  $\xi$ restricted to $\R\setminus [-\eta, \eta],$ for $ \eta >0$. So, we omit its proof for the sake of brevity.

Recall that $\tau_1^{q}$ goes to $0$, in probability, as $q$ increases and that  $\xi$ has c\`adl\`ag paths. Hence, there exists an increasing sequence $(q_n)_{ n\geq 0}$ such that $q_n\rightarrow \infty$ and 
\begin{equation}\label{sequence}
e^{\lambda i_0^{(q_n)}}\underset{n\to\infty}{\longrightarrow}1, \qquad  \textrm{a.s.}
\end{equation}
We also  recall the following form of the Wiener-Hopf factorisation,  for $q>\psi(\lambda)$
\begin{align}\label{whf}
\frac{q}{q-\psi(\lambda)}=\Exp{e^{ \lambda i_0\q}}\Exp{e^{  \lambda m_0\q}}.
\end{align}
From  the {Dominated Convergence}
Theorem and identity (\ref{whf}), it follows that for  $\eps\in(0,1)$,  there exists $N\in \N$ such that for all $n\geq N$
\begin{align}\label{limitedemi}
1-\eps\leq \Exp{e^{\lambda  i_0^{(q_n)}}} \leq \Exp{e^{ \lambda m_0^{(q_n)}}}\leq 1+\eps.
\end{align}
Next, we introduce the compound Poisson process
$$Y_t\q:=S\q_{N_t\q},\qquad t\geq 0,$$
whose Laplace exponent satisfies
\begin{align*}
\psi\q(\lambda):=\log\mathbb{E}\Big[e^{\lambda Y_1\q}\Big]=\frac{q\psi(\lambda)}{q-\psi(\lambda)},
\end{align*}
which is well defined for $\lambda$ such that $q>\psi(\lambda)$.
Similarly, we define 
$$\widetilde{I}_t\q={I}\q_{N_t\q}, \qquad \widetilde{M}\q_t={M}\q_{N_t\q},  \qquad Y^{(+, q)}_t={S}^{(+, q)}_{N_t\q}, \qquad \mbox{ and } \qquad Y^{(-, q)}_t={S}^{(-, q)}_{N_t\q}.$$
We observe from the definitions of $\widetilde{M}\q$ and $\widetilde{I}\q$, and Lemma \ref{lemadoney},
that for all $t\geq 0$, the following {inequalities are} satisfied
\begin{equation}\label{cotas}
\begin{split}
e^{- m_0\q}\int_0^t e^{- Y^{(+,q)}_s}\ud s\le \int_0^t e^{-\xi_s}\ud s \le e^{- i_0\q}\int_0^t e^{- Y^{(-,q)}_s}\ud s.
\end{split}
\end{equation}
We have now all the tools needed to prove Theorem \ref{polinomial}.
\begin{proof}[Proof of Theorem \ref{polinomial}] 

i) 
Assume that $\psi'(0+)>0$. According to Theorem 1 in \cite{MR2178044}, $I_t(\xi)$ converges a.s. 
to $I_\infty(\xi)$, a non-negative and finite limit as $t$ goes to $\infty$. 	
Then, we observe that the  result follows from the Monotone Convergence Theorem.

	We now prove part ii). In order to do so, we use the approximation and notation that we introduced
	at the beginning of this section.
	Let $(q_n)_{n\ge 1}$ be a sequence defined as in (\ref{sequence}) and  observe that for  $n\ge 1$, we have  
	$\psi^{ (q_n)}(0)=0$, $\psi^{\prime (q_n)}(0+)=0$ and $\psi^{\prime\prime (q_n)}(0+)<\infty$. We also observe that the processes 
	$Y^{(+,q_n)}$ and $Y^{(-,q_n)}$ have bounded variation paths.
	
	We take $\ell\geq N$ and $0<\eps< 1$. Hence from Lemmas 13 and 14 in Bansaye et al. \cite{BPS}, we observe that there 
	exists a positive constant $c_1(\ell)$ such that
	$$(1-\eps)c_1(\ell)t^{-1/2}\leq \Exp{\left(\int_0^t e^{-Y^{(\pm, q_\ell)}_s}\ud s\right)^{-p}}\leq (1+\eps)c_1(\ell)t^{-1/2}, \quad \textrm{ as }t\to \infty.$$
	Therefore using (\ref{limitedemi}) and (\ref{cotas}) in the previous inequality, we obtain
	\begin{align}\label{desigualdadc1}
	(1-\eps)^2c_1(\ell)t^{-1/2}\leq \E [ I_{t}(\xi)^{-p} ] \leq (1+\eps)^2c_1(\ell)t^{-1/2}, \quad \textrm{ as }t\to \infty.
	\end{align}
	Next, we take $n,m\ge N$ and observe that the previous inequalities imply
	$$\left(\frac{1-\eps}{1+\eps}\right)^2c_1(n)\leq c_1(m)\leq \left(\frac{1+\eps}{1-\eps}\right)^2c_1(n),\qquad \mbox{ for all } n,m\ge N.$$
	Thus, we deduce that  $(c_1(n))_{ n\ge 1}$ is a Cauchy sequence. Let us denote $c_1$ its limit which, by the previous inequalities is positive.  Let $k\geq N$ such that
	$$(1-\eps)c_1\leq c_1(k)\leq (1+\eps)c_1.$$
	Using this inequality and  (\ref{desigualdadc1}), we observe 
	$$(1-\eps)^3c_1t^{-1/2}\leq \E [ I_{t}(\xi)^{-p} ] \leq (1+\eps)^3c_1t^{-1/2}, \quad \textrm{ as }t\to \infty.$$
	This completes the proof of part ii).
	
	Now, we prove part iii)-a).   {Recalling \eqref{timereversal} yields that}
	\begin{equation}\label{duality}
	\begin{split}
	I_t(\xi)\stackrel{(d)}{=}e^{-\xi_t}I_t(-\xi), \qquad t\geq 0.
	\end{split}
	\end{equation}
	Hence using the Esscher transform (\ref{escheerk}), with $\lambda=p$, we have
	\begin{equation}\label{eqesscher}
	\E \left[ I_t(\xi)^{-p} \right] =   \E \left[ e^{p\xi_t} I_{t}(-\xi)^{-p} \right] =   
	e^{t\psi(p)} \E^{(p)} \left[ I_{t}(-\xi)^{-p} \right], \qquad t\geq 0. 
	\end{equation}
	The inequality \eqref{mdoob} with $q=p$ and the previous identity imply that the decreasing function $t \mapsto \E^{(p)} [ I_{t}(-\xi)^{-p} ] $ is  
	finite for all $t>0$. Recall that under the probability 
	measure $\P^{(p)}$, the process $\xi$ is a L\'evy process with  mean  $\E^{(p)}[\xi_1]=\psi'(p)\in (-\infty, 0)$. Then, as in the proof of part i), 
	$ \E^{(p)} [ I_{t}(-\xi)^{-p} ] $  converges to $ \E^{(p)} [ I_{\infty}(-\xi)^{-p} ], $ as $t$ increases.
	
	Part iii)-b) follows from part ii) and the Esscher transform (\ref{escheerk}). More precisely, we  apply  the Esscher transform with 
	$\lambda=p$ and observe that the Laplace transform of the process $\xi$ under the probability measure $\P^{(p)}$, satisfies
	$\psi_{p}'(0+)=\psi'(p)=0$ and $\psi_{p}^{\prime\prime}(0+)=\psi^{\prime\prime}(p)<\infty$. Therefore  by applying part ii) and identity \eqref{eqesscher}, we {get the existence of} a constant $c_2>0$ such that
	$$\Exp{I_{t}(\xi)^{-p} }=e^{t\psi(p)}\E^{(p)} [ I_{t}(-\xi)^{-p} ] \sim c_2t^{-1/2}e^{t\phi(p)}.$$\\
	
	Finally we prove part iii)-c). Again from the Esscher transform with $\lambda=\tau$, we see
	\[
	\Exp{I_{t}(\xi)^{-p} }=e^{t\psi(\tau)}\E^{(\tau)} [e^{(p-\tau)\xi_t} I_{t}(-\xi)^{-p} ],\qquad t> 0.
	\]
	On the one hand, for $t>0$,
	\[
	\begin{split}
	\E^{(\tau)} [e^{(p-\tau)\xi_t} I_{t}(-\xi)^{-p} ]&=\E^{(\tau)} \left[e^{(p-\tau)(\xi_t-\xi_{t/2})} \frac{(e^{-\xi_{t/2}}I_{t/2}(-\xi)+\int_{t/2}^te^{\xi_u-\xi_{t/2}}{\ud u})^{-(p-\tau)}}{(I_{t/2}(-\xi)+e^{\xi_{t/2}}\int_{t/2}^te^{\xi_u-\xi_{t/2}}{\ud u})^{\tau}} \right]\\
	&\le\E^{(\tau)} \left[e^{(p-\tau)(\xi_t-\xi_{t/2})} \frac{(\int_{0}^{t/2}e^{\xi_{s+t/2}-\xi_{t/2}}{\ud s})^{-(p-\tau)}}{I_{t/2}(-\xi)^{\tau}} \right]\\
	&= \E^{(\tau)} \left[e^{(p-\tau)(\xi_{t/2})} I_{t/2}(-\xi)^{-(p-\tau)}\right]\E^{(\tau)} \left[I_{t/2}(-\xi)^{-\tau} \right],
	\end{split}
	\]
	where we have {used} in the last identity the fact that $(\xi_{u+t/2}-\xi_{t/2},  u\ge 0)$ is independent of  $(\xi_u, 0\le u\le t/2)$ and with the same law as $(\xi_u,u\ge 0)$.
	
	On the other hand, from  (\ref{duality}) we deduce
	\[
	\E^{(\tau)} \left[e^{(p-\tau)(\xi_{t/2})} I_{t/2}(-\xi)^{-(p-\tau)}\right]= \E^{(\tau)} \left[ I_{t/2}(\xi)^{-(p-\tau)}\right],\qquad t> 0.
	\]
	Putting all the pieces together, we get
	\[
	\E^{(\tau)} [e^{(p-\tau)\xi_t} I_{t}(-\xi)^{-p} ]\le \E^{(\tau)} \left[ I_{t/2}(\xi)^{-(p-\tau)}\right]\E^{(\tau)} \left[I_{t/2}(-\xi)^{-\tau} \right],\qquad t> 0
	\]
	implying 
	\[
	\Exp{I_{t}(\xi)^{-p} }\le e^{t\psi(\tau)}\E^{(\tau)} \left[ I_{t/2}(\xi)^{-(p-\tau)}\right]\E^{(\tau)} \left[I_{t/2}(-\xi)^{-\tau} \right],\qquad t> 0.
	\]
	Since $\psi'(\tau)=0$,  we have $\mathbb{E}^{(\tau)}[\xi_1]=0$ and the process $\xi$ oscillates under  $\mathbb{P}^{(\tau)}$.
	Moreover since $\psi^{\prime\prime}(\tau)<\infty$, we deduce that $\psi^{\prime\prime}_\tau(0+)<\infty$. The latter condition implies 
	from part ii) that there exists a constant $c_1(\tau)>0$ such that
	\[
	\E^{(\tau)} [ I_{t}(\xi)^{-(p-\tau)} ]\sim c_1(\tau) t^{-1/2}\qquad \textrm{as} \quad t\to \infty.
	\] 
	Since the process $\xi$ oscillates under  $\mathbb{P}^{(\tau)}$, the dual $-\xi$ also oscillates. This implies that $I_{t}(-\xi)$ goes to $\infty$ and  therefore $\E^{(\tau)} [ I_{t}(-\xi)^{-(p-\tau)} ]$ goes to 0, as $t$ increases. 
	In other words, we have
	$$\Exp{I_t(\xi)^{-p}}=o(t^{-1/2}e^{t\psi(\tau)}), \qquad \textrm{as} \quad t\rightarrow \infty,$$
	as expected.
	
	We now assume that $\xi$ is non-arithmetic, our arguments are similar to those used in \cite{BPS}. We will prove 
	$$\limsup_{t\rightarrow\infty}t^{3/2}e^{-t\psi(\tau)}\Exp{I_t(\xi)^{-p}}< \infty.$$
	In order to prove it, we take $t>0$ and observe 
	$$I_{\lfloor t\rfloor}(\xi)=\underset{k=0}{\overset{\lfloor t\rfloor-1}{\sum}}e^{-\xi_k}\int_0^1e^{-(\xi_{k+u}-\xi_k)}\ud u.$$
	Therefore
	$$\Exp{I_{\lfloor t\rfloor}(\xi)^{-p}}\leq \Exp{\underset{k\leq \lfloor t\rfloor-1}{\min}e^{p\xi_k}\left(\int_0^1e^{-(\xi_{k+u}-\xi_k)}\ud u\right)^{-p}}.$$	
	Conditioning on the value when the minimum is {attained, let} say {$k^\prime$},  and observing that $e^{p\xi_{k^\prime}}$
	is independent of $\left(\int_0^1e^{-(\xi_{k^\prime+u}-\xi_{k^\prime})}\ud u\right)^{-p}$ and the latter has the same law as $\left(\int_0^1e^{-\xi_{u}}\ud u\right)^{-p}$, we deduce
	$$\Exp{I_{\lfloor t\rfloor}(\xi)^{-p}}\leq \Exp{\underset{k\leq \lfloor t\rfloor-1}{\min}e^{p\xi_k}}\Exp{\left(\int_0^1e^{-\xi_{u}}\ud u\right)^{-p}}.$$	
	Finally, by Lemma 7 in \cite{MR1633937}, there {exists} a $C>0$ such that 
	$$\Exp{\underset{k\leq \lfloor t\rfloor -1}{\min}e^{p\xi_k}}\sim C\lfloor t\rfloor^{-3/2}e^{\lfloor t\rfloor\psi(\tau)},\qquad \textrm{for $t$ large.}$$
	The claim follows from the monotonicity of $\Exp{I_{\lfloor t\rfloor}(\xi)^{-p}}$ and the fact that $t\in (\lfloor t\rfloor, \lfloor t\rfloor +1)$.
\end{proof}
The idea of the proof of Theorem \ref{funcLev}
is to study the asymptotic behaviour of $\mathcal{E}_F(n/q)$ for $q$ fixed and large $n$, 
and then to use the monotonicity of $F$ to deduce the asymptotic behaviour of $\mathcal{E}_F(t)$ when 
$t$ goes to infinity.
In order to do so,  we  use a key result due to Guivarc'h and Liu that we state in the Appendix 
for the sake of completeness.

Let $q>0$ and define the sequence $q_n=n/q,$ for $n\ge 0$. For $k\ge 0$, we also define
\[
\widetilde{\xi}^{(k)}_u=\xi_{q_k+u}-\xi_{q_k}, \qquad \textrm{for }\quad u\ge 0,
\]
and 
\begin{equation}\label{sequences}
a_k=e^{-\widetilde{\xi}^{(k)}_{q_{(k+1)}-q_k}}\qquad\textrm{and}\qquad b_k=\int_0^{q_{(k+1)}-q_k}e^{-\widetilde{\xi}^{(k)}_{u}}\ud u.
\end{equation}
Hence, $(a_k,b_k)$ is a $\R_+^2$-valued sequence of {i.i.d.} random variables. Observe that
\[
a_0=e^{-\xi_{\frac{1}{q}}} \qquad \textrm{and}\qquad \frac{b_0}{1-a_0}=\frac{I_{\frac{1}{q}}(\xi)}{1-e^{-\xi_{\frac{1}{q}}}},
\]
which are not constant a.s. as required by Theorem \ref{th3}. Moreover, we have
\[ 
\int_{q_{i}}^{q_{i+1}}e^{- \xi_u}\ud u = 
e^{- \xi_{q_{i}}} b_{i}=\prod_{k=0}^{i-1} a_k b_{i}=A_{i}b_{i},
\]
where $A_k$ is defined as in Theorem \ref{th3}. The latter identity implies
$$ I_{q_n}(\xi)=\sum_{i=0}^{n-1} \int_{q_{i}}^{q_{i+1}}e^{- \xi_u}\ud u  = 
\sum_{i=0}^{n-1} A_i b_{i}:=B_{n}.
$$
In other words, we have all the objects required  to apply Theorem \ref{th3}.

\begin{proof}[Proof of Theorem \ref{funcLev}]
	
	i) The proof uses similar arguments as those used in the proof of Theorem \ref{polinomial}-i).

	{ii)} We now assume that $\psi'(0+)=0$. We define the sequence $(a_k, b_k)_{k\ge 0}$  as in (\ref{sequences}) and follow the 
	same notation as in Theorem \ref{th3}. We take $0<\eta<\alpha$ and $d_p>1$ such that 
	$-\theta^-/d_p<p$ and $ \theta^-<-\eta<\eta+p<\theta^+$, and let
	$$ (\eta, \kappa, \vartheta)= \left(\eta, \frac{-\theta^-}{d_p},p\right).$$
	Next, we verify  the {moment conditions} of Theorem \ref{th3} for  the couple $(a_0, b_0)$. From the definition of $(a_0, b_0)$, it is clear
	\[
	\mathbb{E}\left[\ln a_0\right]=\frac{\psi^\prime(0+)}{q}=0, \qquad \mathbb{E}\left[a_0^{\kappa}\right]=e^{\psi(-\kappa)/q}\quad\textrm{ and }\quad \mathbb{E}\left[a_0^{-\eta}\right]=e^{\psi(\eta)/q},
	\]
	which are well defined.
	Similarly as in (\ref{mdoob}), by $L_1$-Doob's 
	inequality (see \cite{acciaio2012trajectorial}) and the Esscher transform \eqref{escheerk}
	\[
	\begin{split}
	\mathbb{E}\left[b_0^\eta\right]\le q^{\eta}\mathbb{E}\left[\sup_{0\le u\le 1/q} e^{-\eta \xi_u}\right]
	\le \frac{e}{e-1}q^{\eta}e^{\frac{\psi(-\eta)}{q}}\left(1-\eta\psi'(-\eta)-\psi(-\eta)\right)<\infty,
	\end{split}
	\]
	and
\[
	\mathbb{E}\left[a_0^{-\eta}b_0^{-\vartheta}\right]
	\le q^{\vartheta}\mathbb{E}\left[e^{\eta \xi_{\frac{1}{q}}}\sup_{0\le u\le 1/q} e^{\vartheta \xi_u}\right]
	\le  q^{\vartheta}\mathbb{E}\left[\sup_{0\le u\le 1/q} e^{(\eta+\vartheta) \xi_u}\right]<\infty.
	\]
Therefore the asymptotic behaviour of $\mathcal{E}_F(q_n)$ for large $n$, follows from  a direct application of Theorem \ref{th3}. In other words, there exists a 
	positive constant $c(q)$ such that 
	\begin{equation*}
	\sqrt{n}\mathcal{E}_F(q_n) \sim c(q), \qquad\textrm{ as }\quad n \to \infty.  
	\end{equation*}
	In order to get our result, we take $t$ to be a positive real number. Since  the mapping $s \mapsto \mathcal{E}_F(s)$ is non-increasing, we get
	$$
	\sqrt{t} \mathcal{E}_F(t)\leq \sqrt{t}\mathcal{E}_F(\lfloor qt \rfloor /q)
	=\sqrt{\frac{t}{\lfloor qt \rfloor}}\sqrt{\lfloor qt \rfloor}\mathcal{E}_F(\lfloor qt \rfloor /q).
	$$
	Similarly
	$$
	\sqrt{t} \mathcal{E}_F(t) \geq \sqrt{t}\mathcal{E}_F((\lfloor qt \rfloor +1)/q)
	=\sqrt{\frac{t}{\lfloor qt \rfloor+1}}\sqrt{\lfloor qt \rfloor+1}{\mathcal{E}_F}((\lfloor qt \rfloor +1)/q).
	$$
	Therefore
	\[
	\sqrt{t} \mathcal{E}_F(t) {\sim}c(q)q^{-1/2}, \qquad\textrm{ as } t \to \infty.
	\]
	Moreover, we deduce that
	$  c(q)q^{-1/2}$ is positive 
	{and does not depend on $q$. Hence we denote this constant by $c_1$.} This concludes the proof of point {ii)}.\\

	{iii)} For the rest of the proof, we assume that $\psi'(0)<0$. We first prove part {a)}.  Since $\psi^\prime(p)<0$,  from Theorem \ref{polinomial} part {iii)}-a) 
	we {know} that 
	$$\Exp{I_t(\xi)^{-p}}\sim  e^{t\psi(p)}\mathbb{E}^{(p)}[I_\infty(-\xi)^{-p}], \qquad as \quad t\rightarrow \infty.$$
	Hence the asymptotic behaviour is proven if we show that
	\begin{equation*} \mathcal{E}_F(t) \sim \mathbf{k} \E\left[  I_t(\xi)^{-p}\right], \quad \textrm{as }\quad t\to\infty.\end{equation*}
	Since $\psi'(p)<0$, there is $ \varepsilon>0 $ such that  $p(1+\varepsilon)<{\theta^+}$, ${\psi}(p(1+\varepsilon) )<\psi(p)$ and 
	$\psi^{\prime}((1+\varepsilon)p)<0$. Hence, from Lemma 2 {(see the Appendix)}, we deduce that there is a constant $M$ such that
	\begin{equation}\label{sim1}
	\left| F\left( I_t(\xi)\right)-\mathbf{k}I_t(\xi)^{-p} \right|
	\leq M I_t(\xi)^{-(1+\varepsilon)p}. \end{equation}
	In other words, it is enough  to prove
	\begin{equation} \label{limit1}
	\E \left[ I_{t}(\xi)^{-(1+\varepsilon)p} \right]=o(e^{t{\psi}(p)}), 
	\quad \textrm{as }\quad t\to\infty. 
	\end{equation} 
{From the Esscher transform (\ref{escheerk}) with $\lambda=(1+\varepsilon)p$, we deduce
	\[
	\E \left[ I_t(\xi)^{-(1+\varepsilon)p} \right]  = 
	\E \left[  e^{p(1+\varepsilon)\xi_s}I_t(-\xi)^{-(1+\varepsilon)p} \right] 
	=   e^{t\psi(p)}e^{t{\psi}_p(\varepsilon p)} \E^{((1+\varepsilon)p)} \left[I_t(-\xi)^{-(1+\varepsilon)p}
	\right] .
	\]
	This and Equation \eqref{mdoob} with $\lambda=(1+\varepsilon)p$ imply that  $ \E^{((1+\varepsilon)p)} [ I_{t}(-\xi)^{-(1+\varepsilon)p} ] $ is  
	finite for all $t>0$.} Similarly as in the proof of Theorem \ref{polinomial} {iii)}-a), we can deduce that  
	$\E^{((1+\varepsilon)p)} [ I_{t}(-\xi)^{-(1+\varepsilon)p} ] $ has a finite limit, as $t$ goes to $\infty$. 
	We conclude by observing that ${\psi}_p(\varepsilon p)$ is negative implying that (\ref{limit1}) holds. We complete the proof of point iii)-a)
	by observing that \eqref{sim1} and \eqref{limit1} {yield}
		$$ \E[F\left( I_t(\xi)\right)]\sim \mathbf{k}\E[I_t(\xi)^{-p}], \quad t \to \infty. $$\\

	We now prove part {b)}.  Since $\psi^\prime(p)=0$ and $\psi^{\prime\prime}(p)<\infty$,  from Theorem \ref{polinomial} part {iii)}-{b)}
	we {know} that there exists a positive constant $c_2$ such that
	$$\Exp{I_t(\xi)^{-p}}\sim c_2 t^{-1/2}e^{t\psi(p)}, \qquad as \quad t\rightarrow \infty.$$
	Similarly as in the proof of part {a)}, the asymptotic behaviour is proven if we show that
	\[
	\mathcal{E}_F(t)  \sim {\mathbf{k}}  \E\left[  I_t(\xi)^{-p}\right], \quad \textrm{as }\quad t\to\infty,
	\]
	which amounts to showing that  
	\begin{equation*} 
	\E \left[ I_{t}(\xi)^{-(1+\varepsilon)p} \right]=o(t^{-1/2}e^{t{\psi}(p)}), 
	\quad \textrm{as }\quad t\to\infty
	\end{equation*} 
	for $\eps$ small enough.
	The latter follows from of Theorem \ref{polinomial} {iii)}-{c)}.\\

	Finally, we prove part {c)}.  Similarly as in the proof of  part {ii)}, we define the sequence 
	$(a_k, b_k)_{k\ge 0}$  as in (\ref{sequences}) and follow the same notation as in Theorem \ref{th3}.
	Let us choose $0<\eta<\alpha$ such that $0<\tau-\eta < \tau+p+\eta<\theta^+$ and take
	$$ (\eta, \kappa, \vartheta)= \left(\eta, \tau,p\right).$$
	
	Next, we apply the Esscher transform \eqref{escheerk} with $\lambda = \tau$ and observe 
	\begin{equation}\label{chgtproba}
	\E[F(I(q_n))]e^{-q_n{\psi}(\tau)}=\E^{(\tau)}[e^{-\tau {\xi}_{q_n}}F(I(q_n))]= \E^{(\tau)}[A_n^\tau F(B_n)] .
	\end{equation}
	Hence in order to apply Theorem \ref{th3},  we need the {moment conditions on} $(a_0, b_0)$ to be 
	satisfied under the probability measure $\p^{(\tau)}$. We first observe, 
	\[
	\E^{(\tau)}[\ln a_0]=\E^{(\tau)}[\xi_{1/q}]=e^{-\psi(\tau)/q}\e[\xi_{1/q}e^{\tau \xi_{1/q}}]=\frac{\psi^\prime(\tau)}{q}=0.
	\]
	Similarly, we get
	\[
	\mathbb{E}^{(\tau)}\left[a_0^{\kappa}\right]=\e^{(\tau)}[e^{-\kappa \xi_{1/q}}]=e^{-\psi(\tau)/q}\quad\textrm{ and }\quad 
	\mathbb{E}^{(\tau)}\left[a_0^{-\eta}\right]=\e^{(\tau)}[e^{\eta \xi_{1/q}}]=e^{\psi_\tau(\eta)/q},
	\]
	where $\psi_{\tau}(\lambda)=\psi(\tau+\lambda)- \psi(\tau)$. From our assumptions both expectations are finite.
	
	Again, we use similar arguments as those used in  (\ref{mdoob}) to deduce
	$$
	\mathbb{E}^{(\tau)}\left[b_0^\eta\right]
	\le q^{-\eta}\mathbb{E}^{(\tau)}\left[\sup_{0\le u\le 1/q} e^{-\eta \xi_u}\right]\le 
	q^{-\eta}e^{-\psi(\tau)/q}\mathbb{E}\left[\sup_{0\le u\le 1} e^{(\tau-\eta) \xi_u}\right]
	<\infty,
	$$
	and
	$$
	\mathbb{E}^{(\tau)}\left[a_0^{-\eta}b_0^{-p}\right]\le q^{p}\mathbb{E}^{(\tau)}\left[e^{\eta \xi_{\frac{1}{q}}}\sup_{0\le u\le 1/q} 
	e^{p \xi_u}\right]\le
	q^{p}e^{-\psi(\tau)/q}\mathbb{E}\left[\sup_{0\le u\le 1} e^{(\tau+\eta+p) \xi_u}\right]
	<\infty.
	$$
	Therefore the asymptotic behaviour of $\E^{(\tau)}[A_n^\tau F(B_n)]$ follows from a direct application of Theorem \ref{th3} 
	with the functions $\tilde{\psi}(x)=F(x)$ and $\tilde{\phi}(x)= x^{\tau} $. 
	In other words, we conclude that there exists a 
	positive constant $c(q)$ such that 
	\begin{equation*}
	n^{3/2}\E^{(\tau)}[A_n^{\tau}F(B_n)] \sim c(q), \quad n \to \infty.
	\end{equation*}
	In particular from \eqref{chgtproba}, we deduce 
	$$ {\mathcal{E}}_F(q_n) \sim  c(q) e^{-n{\psi}(\tau)/q}n^{-3/2} , \quad n \to \infty .$$
	Then using the monotonicity of $F$ as in the proof of part ii), we get that for $n$ large enough,
	\begin{equation}\label{eq}
	c(q)q^{-3/2} e^{-{\psi}(\tau)/q}  
	\leq n^{3/2}e^{n{\psi}(\tau)}\mathcal{E}_F(n)\leq 
	c(q)q^{-3/2} .
	\end{equation}
	A direct application of Lemma \ref{lemtec} then yields the existence of a nonnegative constant $c_4$ such that
	$$\lim_{q \to \infty}c(q)q^{-3/2}=c_4.  $$
	Moreover, \eqref{eq} yields that $c_4$ is positive.
	This ends the proof.
\end{proof}

\section{Appendix}

We recall in this section a Theorem due to Guivarc'h and Liu (see Theorem 2.1 in \cite{GL}) and two technical Lemmas stated in \cite{BPS}:

\begin{theorem}[Guivarc'h, Liu 01]
	\label{th3} 
	Let $(a_n,b_n)_{n \geq 0}$ be a $\R_+^2$-valued sequence of {i.i.d.} random variables such that $\E[\ln a_0]=0$. 
	Assume that $b_0/(1-a_0)$ is not constant a.s. and define
	\[
	A_0:=1, \quad A_n:=\prod_{k=0}^{n-1}a_k\quad \textrm{ and }\quad B_n:=\sum_{k=0}^{n-1}A_kb_k, \qquad \textrm{ for }\quad n\geq 1.
	\] 
	Let $ \eta, \kappa, \vartheta$ be three positive numbers such that $\kappa <\vartheta$, and $\tilde{\phi}$ and $\tilde{\psi}$ be 
	two positive continuous functions on $\R_+$  such that they do not vanish and for a constant $C>0$ and for every $a>0$, $b\geq 0$, 
	$b'\geq 0$, we have
	$$ \tilde{\phi}(a)\leq C a^\kappa, \quad \tilde{\psi}(b)\leq \frac{C}{(1+b)^\vartheta}, \quad\text{ and }\quad 
	|\tilde{\psi}(b)-\tilde{\psi}(b')|\leq C |b-b'|^\eta. $$
	Moreover, assume that 
	$$ \E\big[a_0^\kappa\big]<\infty,\quad \E\big[a_0^{-\eta}\big]<\infty,\quad \E\big[b_0^{\eta}\big]<\infty\quad\text{ and }
	\quad \E\big[a_0^{-\eta}b_0^{-\vartheta}\big]<\infty. $$
	Then, there exist two positive constants $c(\tilde{\phi},\tilde{\psi})$ and $c(\tilde{\psi})$ such that 
	$$ \underset{n \to \infty}{\lim}n^{3/2}\E \left[ \tilde{\phi}(A_n)\tilde{\psi}(B_n) \right]=c(\tilde{\phi},\tilde{\psi})
	\qquad \text{ and }\qquad \underset{n \to \infty}{\lim}n^{1/2}\E \left[ \tilde{\psi}(B_n) \right]=c(\tilde{\psi}).  $$
\end{theorem}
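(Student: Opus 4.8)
The plan is to reduce the statement to a fluctuation-theoretic analysis of the centered random walk $S_n:=\ln A_n=\sum_{k=0}^{n-1}\ln a_k$. The hypotheses $\E[a_0^\kappa]<\infty$ and $\E[a_0^{-\eta}]<\infty$ give exponential moments of $\ln a_0$ on both sides of the origin, so that $S$ is a mean-zero walk with finite, strictly positive variance $\sigma^2=\mathrm{Var}(\ln a_0)$ (non-degeneracy being guaranteed by the assumption that $b_0/(1-a_0)$ is not a.s. constant). Since the pairs $(a_k,b_k)$ are i.i.d., reversing the time index shows that $B_n$ has the same law as the forward affine chain $R_n$ given by $R_0=0$ and $R_{k+1}=a_kR_k+b_k$; in particular $B_n\to\infty$ a.s. in this critical regime. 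The guiding heuristic I would make rigorous is that $B_n$ is comparable to $e^{M_n}$, where $M_n:=\max_{0\le k\le n}S_k$: on one hand $B_n\ge A_kb_k=e^{S_k}b_k$ for every $k$, and on the other $B_n\le e^{M_n}\sum_{k=0}^{n-1}e^{S_k-M_n}b_k$, whose prefactor stays of order one on the relevant trajectories by virtue of $\E[b_0^\eta]<\infty$ and the exponential separation of $S_k$ from its maximum. Because $\tilde{\psi}(b)\le C(1+b)^{-\vartheta}$ decays, only paths with $M_n=O(1)$, i.e. those for which the walk stays low, contribute to the expectations.

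For the second limit I would invoke classical fluctuation theory: for a centered, finite-variance walk one has $\P(M_n\le x)\sim\theta(x)\,n^{-1/2}$ (Spitzer's arcsine/ladder-height asymptotics, equivalently the renewal theorem applied to the strict ascending ladder epochs of $S$). The plan is then to use the Hölder bound $|\tilde{\psi}(b)-\tilde{\psi}(b')|\le C|b-b'|^\eta$ together with the comparison $B_n\asymp e^{M_n}$ to replace $\tilde{\psi}(B_n)$, up to an error controlled by $\E[b_0^\eta]$, by a bounded functional of the trajectory of $S$ conditioned to stay below a level. Passing to the limit identifies
\[
c(\tilde{\psi})=\lim_{n\to\infty}n^{1/2}\E[\tilde{\psi}(B_n)]
\]
as the integral of $\tilde{\psi}$ against the quasi-stationary (renewal) measure associated with $S$ reflected at its maximum, and positivity follows because $\tilde{\psi}$ does not vanish.

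For the first, joint limit the extra factor $\tilde{\phi}(A_n)\le C A_n^\kappa=Ce^{\kappa S_n}$ forces $S_n$ to be very negative. The relevant event is therefore that the walk both stays below a fixed level up to time $n$ (so that $\tilde{\psi}(B_n)$ is of order one) and terminates far below it (so that $\tilde{\phi}(A_n)$ is not exponentially small). This is exactly the regime of a random walk conditioned to stay negative together with a local limit theorem for its endpoint, whose joint probability decays like $n^{-3/2}$; I would use the local-limit asymptotics for conditioned walks of Vatutin--Wachtel / Caravenna type to extract $c(\tilde{\phi},\tilde{\psi})$. Here the condition $\kappa<\vartheta$ and the moment bound $\E[a_0^{-\eta}b_0^{-\vartheta}]<\infty$ are what make the joint lower tail of $(A_n,B_n)$ integrable against $\tilde{\phi}\otimes\tilde{\psi}$ and guarantee that the last increment does not spoil the comparison.

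The main obstacle, as usual for such precise asymptotics, is not the order $n^{-1/2}$ respectively $n^{-3/2}$ --- these are immediate from fluctuation theory --- but the identification of the exact, positive constants. This requires turning the heuristic $B_n\asymp e^{M_n}$ into a genuine convergence of the (suitably conditioned and rescaled) laws of $B_n$ towards an explicit stationary object, and the Hölder continuity of $\tilde{\psi}$ together with the four moment conditions are precisely the ingredients that let one control the approximation errors uniformly in $n$ and interchange the limit with the relevant integrals. Establishing this convergence via renewal theory for the ladder structure of $S$, and verifying the uniform integrability needed to pass to the limit, is where the real work lies.
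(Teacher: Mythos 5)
First, a point of comparison that matters here: the paper does not prove this statement at all. It is quoted in the appendix as an external result (Theorem 2.1 in \cite{GL}), and the paper's own work begins only where the theorem is \emph{applied}, via the discretization $(a_k,b_k)$ of the exponential functional. So there is no in-paper proof to match your argument against; the only question is whether your sketch would stand as an independent proof of Guivarc'h--Liu. It would not, as written. What you give is a program, not a proof: the two decisive steps --- upgrading the heuristic $B_n\asymp e^{M_n}$ to convergence of the suitably conditioned and rescaled law of $B_n$ towards an explicit limiting object, and the uniform integrability needed to identify and to prove positivity of the constants $c(\tilde{\psi})$ and $c(\tilde{\phi},\tilde{\psi})$ --- are exactly the steps you defer (``this is where the real work lies''). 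But the theorem's content is precisely the existence of those limits; the orders $n^{-1/2}$ and $n^{-3/2}$ alone are, as you yourself note, routine fluctuation theory.

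There is also a concrete error in your description of the dominant event for the $n^{3/2}$ limit. Since $\tilde{\phi}(A_n)\leq C A_n^{\kappa}=Ce^{\kappa S_n}$ with $\kappa>0$, the weight $\tilde{\phi}(A_n)$ is exponentially small exactly when $S_n$ is very negative; a trajectory that ``terminates far below'' the level contributes nothing. The contributing trajectories are those for which the walk stays below a fixed level for all $k\leq n$ \emph{and} ends within $O(1)$ of it, i.e.\ $M_n\leq x$ together with $S_n=O(1)$ --- a bridge-type event whose probability is of order $n^{-3/2}$ by the conditioned local limit theorems you invoke. So the $n^{-3/2}$ rate you quote is the rate for the correct event, while your parenthetical justification (``so that $\tilde{\phi}(A_n)$ is not exponentially small'') points at the opposite one; as it stands, the geometry of your second argument is inconsistent with the estimate it relies on. Repairing this, and then carrying out the renewal-theoretic identification of the limit laws with full uniform-integrability control (this is where the hypotheses $\kappa<\vartheta$, $\E[a_0^{-\eta}b_0^{-\vartheta}]<\infty$ and the H\"older bound actually enter), is the substance of the Guivarc'h--Liu proof and is missing from the proposal.
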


\begin{lemma} \label{lemtecaussi}
	Assume that $F$ satisfies one of the Assumptions ({\bf A1}) or ({\bf A2}). 
	Then there exist two positive  finite constants $\eta$ and $M$ such that for all  $(x,y)$ in $\R_+^2$ and $\varepsilon $ in $[0,\eta]$,
	\begin{eqnarray*}
	\Big| F(x)-\mathbf{k} x^{-p} \Big| &\leq & Mx^{-(1+\varepsilon)p}, \\
	\Big| F(x)-F(y) \Big| &\leq &M\Big| x^{-p}-y^{-p} \Big|.
	\end{eqnarray*}
\end{lemma}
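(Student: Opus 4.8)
The plan is to establish the two estimates under (\textbf{A1})---the case in which they are actually invoked, through \eqref{sim1} in the proof of Theorem \ref{funcLev} iii)-a) and iii)-b); under (\textbf{A2}) the lemma enters only via the H\"older continuity of $F$ and the bound $F(x)\le\mathbf{k}(1+x)^{-p}$, both of which are part of the hypothesis. The guiding observation is that, under (\textbf{A1}), $F$ deviates from the pure power $\mathbf{k}x^{-p}$ only by terms of the smaller order $x^{-p-1}$. I would therefore reduce everything to two pointwise estimates of that order: one for the difference $F(x)-\mathbf{k}x^{-p}$, the other for the (a.e.) derivative $F'$.

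For the first estimate I would use the explicit form of $F$ to write
\[
F(x)-\mathbf{k}x^{-p}=\mathbf{k}\big[(x+1)^{-p}-x^{-p}\big]+\mathbf{k}(x+1)^{-p-\varsigma}h(x).
\]
The mean value theorem applied to $t\mapsto t^{-p}$ on $(x,x+1)$ bounds the first bracket by $p\,x^{-p-1}$, and the second term is at most $\mathbf{k}\|h\|_\infty(x+1)^{-p-\varsigma}\le\mathbf{k}\|h\|_\infty\,x^{-p-1}$ since $\varsigma\ge1$; hence $|F(x)-\mathbf{k}x^{-p}|\le C_1x^{-p-1}$ for all $x>0$. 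To pass from $x^{-p-1}$ to $x^{-(1+\varepsilon)p}$ I would split on $x$: for $x\ge1$ one has $x^{-p-1}\le x^{-(1+\varepsilon)p}$ as soon as $\varepsilon p\le1$, whereas for $0<x<1$ I would instead use the crude bound $|F(x)-\mathbf{k}x^{-p}|\le\|F\|_\infty+\mathbf{k}x^{-p}$ together with $x^{-(1+\varepsilon)p}\ge x^{-p}\ge1$. Choosing $\eta=1/p$ and $M$ as the largest constant produced then gives the first inequality uniformly in $\varepsilon\in[0,\eta]$.

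For the second estimate, the key structural fact is that $F$, being obtained from bounded Lipschitz functions by sums and products, is itself Lipschitz and hence absolutely continuous, so $F'$ exists almost everywhere; differentiating the explicit form and using that $h$ and its a.e.\ derivative are bounded and that $\varsigma\ge1$ yields $|F'(x)|\le C_2(x+1)^{-p-1}\le C_2x^{-p-1}$ a.e. Integrating between $x$ and $y$ and using $\int_{x\wedge y}^{x\vee y}t^{-p-1}\,\ud t=\tfrac1p|x^{-p}-y^{-p}|$ then gives directly
\[
|F(x)-F(y)|\le\frac{C_2}{p}\,\big|x^{-p}-y^{-p}\big|,
\]
so the second inequality holds with $M\ge C_2/p$.

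Once the two $O(x^{-p-1})$ bounds are secured the rest is bookkeeping, so the only genuine care is needed near the origin and in the a.e.\ differentiability. Near $x=0$ the power $\mathbf{k}x^{-p}$ blows up while $F$ stays bounded, which is exactly why the first inequality cannot follow from the $x^{-p-1}$ bound alone and must instead come from the boundedness of $F$; this is also why it is genuinely an (\textbf{A1}) statement, since under (\textbf{A2}) one controls $F$ only from above by $\mathbf{k}(1+x)^{-p}$, with no matching lower bound, and no estimate by $x^{-(1+\varepsilon)p}$ with $\varepsilon>0$ can (or need) be expected there.
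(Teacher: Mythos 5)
You cannot be compared against the paper's own argument here, because the paper gives none: Lemma~\ref{lemtecaussi} is only \emph{recalled} in the Appendix as one of two technical lemmas ``stated in \cite{BPS}'', with no proof supplied. Judged on its own, your proof of the ({\bf A1}) case is correct and complete: the decomposition $F(x)-\mathbf{k}x^{-p}=\mathbf{k}\big[(x+1)^{-p}-x^{-p}\big]+\mathbf{k}(1+x)^{-p-\varsigma}h(x)$, the mean value theorem, and $\varsigma\ge 1$ give $|F(x)-\mathbf{k}x^{-p}|\le C_1x^{-p-1}$; the split at $x=1$ (boundedness of $F$ plus $x^{-p}\le x^{-(1+\varepsilon)p}$ below $1$, and $x^{-p-1}\le x^{-(1+\varepsilon)p}$ above $1$ whenever $\varepsilon p\le 1$) yields constants $M$ and $\eta=1/p$ that are uniform in $\varepsilon\in[0,\eta]$; and since under ({\bf A1}) the function $F$ is Lipschitz with $|F'(x)|\le C_2(1+x)^{-p-1}$ almost everywhere, integrating $F'$ between $x$ and $y$ gives $|F(x)-F(y)|\le (C_2/p)\,|x^{-p}-y^{-p}|$. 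These are exactly the two claimed estimates.

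Your closing observation about ({\bf A2}) is also correct, and deserves to be stated more bluntly: as literally written (``one of the Assumptions ({\bf A1}) or ({\bf A2})''), the lemma is false under ({\bf A2}). The function $F(x)=\tfrac12\mathbf{k}(1+x)^{-p}$ satisfies ({\bf A2}), yet $|F(x)-\mathbf{k}x^{-p}|\sim\tfrac12\mathbf{k}x^{-p}$ as $x\to\infty$, which is not $O(x^{-(1+\varepsilon)p})$ for any $\varepsilon>0$; and the second estimate fails as well for a genuinely H\"older (non-Lipschitz) $F$ of index $\alpha<1$, since $|x^{-p}-y^{-p}|$ is locally of order $|x-y|$ away from the origin. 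None of this harms the paper: the lemma is invoked only through \eqref{sim1}, in parts iii)-a) and iii)-b) of Theorem~\ref{funcLev}, where $F$ satisfies ({\bf A1}); in the ({\bf A2}) cases ii) and iii)-c), the properties fed into Theorem~\ref{th3} --- H\"older continuity and the bound $F(x)\le\mathbf{k}(1+x)^{-p}$ --- are hypotheses of ({\bf A2}) itself. So your proof covers everything the paper actually needs, and in addition it correctly diagnoses that the ``or ({\bf A2})'' in the recalled statement should either be dropped or the conclusion there weakened to $\varepsilon=0$.
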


\begin{lemma}\label{lemtec}
	Assume that the non-negative sequences $(a_{n,q})_{(n,q) \in \N^2}$, $(a'_{n,q})_{(n,q) \in \N^2}$ and $(b_{n})_{n \in \N}$ satisfy for every $(n,q) \in \N^2$:
	$$ a_{n,q}\leq b_n \leq a'_{n,q}, $$
	and that there exist  three sequences $(a(q))_{q \in \N}$, $(c^-(q))_{q \in \N}$ and $(c^+(q)_{q \in \N}$ such that
	$$ \underset{n \to \infty}{\lim}a_{n,q}=c^-(q)a(q), \quad \underset{n \to \infty}{\lim}a'_{n,q}=c^+(q)a(q), \quad \text{and} \quad \underset{q \to \infty}{\lim}c^-(q)=\underset{q \to \infty}{\lim}c^+(q)=1. $$
	Then there exists a non-negative constant $a$ such that 
	$$ \underset{q \to \infty}{\lim}a(q)=\underset{n \to \infty}{\lim}b_{n}=a .$$
\end{lemma}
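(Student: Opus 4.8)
The plan is to run a two-stage squeeze: first send $n\to\infty$ with $q$ fixed in order to trap every cluster value of $(b_n)_{n\in\N}$ between $c^-(q)a(q)$ and $c^+(q)a(q)$, and then send $q\to\infty$, exploiting $c^\pm(q)\to 1$, to pin down both limits at once. Write $\underline{b}:=\liminf_{n\to\infty}b_n$ and $\overline{b}:=\limsup_{n\to\infty}b_n$; these do not depend on $q$. Fixing $q$ and passing to the limit $n\to\infty$ in the standing inequality $a_{n,q}\le b_n\le a'_{n,q}$, and using that $\lim_{n\to\infty}a_{n,q}=c^-(q)a(q)$ and $\lim_{n\to\infty}a'_{n,q}=c^+(q)a(q)$ exist, I obtain
\[
c^-(q)a(q)\le \underline{b}\le \overline{b}\le c^+(q)a(q),\qquad \text{for every }q\in\N.
\]
In particular $\overline{b}$ is finite, being bounded above by the finite quantity $c^+(q)a(q)$, and $\underline{b}\ge 0$ by non-negativity.

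Next I would divide by the factors $c^\pm(q)$. Since $c^-(q)\to 1$ and $c^+(q)\to 1$, there is $q_0$ with $c^-(q)>0$ and $c^+(q)>0$ for all $q\ge q_0$, and for such $q$ the previous chain rearranges to
\[
\frac{\overline{b}}{c^+(q)}\le a(q)\le \frac{\underline{b}}{c^-(q)}.
\]
Letting $q\to\infty$ and using $c^\pm(q)\to 1$ then yields $\overline{b}\le \liminf_{q\to\infty}a(q)\le \limsup_{q\to\infty}a(q)\le \underline{b}$. Together with the trivial inequality $\underline{b}\le \overline{b}$, this forces $\underline{b}=\overline{b}=:a$. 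Consequently $(b_n)$ converges to $a$, the same sandwich shows $\lim_{q\to\infty}a(q)=a$, and non-negativity gives $a\ge 0$, which is exactly the claim.

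The argument is essentially routine, its whole content being the commutation of the two limits made possible by the common factor $a(q)$ and the normalisation $c^\pm(q)\to 1$. The only point needing a word of care is the degenerate case $\underline{b}=0$, where the division above is vacuous: there one argues directly that $c^-(q)a(q)\le \underline{b}=0$ together with $a(q)\ge 0$ forces $a(q)=0$ for $q\ge q_0$, whence $\overline{b}\le c^+(q)a(q)=0$ and the conclusion $a=0$ follows. I do not foresee any genuine obstacle beyond this bookkeeping.
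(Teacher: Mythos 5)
Your proof is correct. Note that the paper itself contains no proof of this lemma: it is merely recalled in the Appendix, with the reader referred to \cite{BPS}, so there is nothing to compare against line by line; your two-stage squeeze (fix $q$, let $n\to\infty$ to trap $\liminf_n b_n$ and $\limsup_n b_n$ between $c^-(q)a(q)$ and $c^+(q)a(q)$; then let $q\to\infty$ to collapse the sandwich and identify $\lim_q a(q)$ with $\lim_n b_n$) is exactly the natural self-contained argument for this statement. One cosmetic remark: the case $\underline{b}=0$ needs no separate treatment, since for $q\ge q_0$ the inequalities $\overline{b}/c^+(q)\le a(q)\le \underline{b}/c^-(q)$ are still valid (the right-hand side is then $0$, not vacuous) and the same passage to the limit in $q$ gives $\underline{b}=\overline{b}=\lim_q a(q)=0$.
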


\bibliographystyle{abbrv}
\bibliography{bibliofuncexp}

\end{document}